\newtheorem{remark}{Remark}
\newtheorem{theorem}{Theorem}
\title{Coordinate Descent Methods for \\ Symmetric Nonnegative Matrix Factorization}
\author{Arnaud Vandaele\\
        University of Mons, Belgium \\
        {\tt arnaud.vandaele@umons.ac.be}
        \and
    		Nicolas Gillis\\
        University of Mons, Belgium \\
        {\tt nicolas.gillis@umons.ac.be}
        \and
    Qi Lei\\
        University of Texas, Austin \\
        {\tt leiqi@ices.utexas.edu}
        \and
    Kai Zhong\\
        University of Texas, Austin \\
        {\tt zhongkai@ices.utexas.edu}
        \and
    Inderjit Dhillon\\
        University of Texas, Austin \\
        {\tt inderjit@cs.utexas.edu}
        }
\date{}
\begin{document}

\maketitle

\begin{abstract} 
Given a symmetric nonnegative matrix $A$, symmetric nonnegative matrix factorization (symNMF) is the problem of finding a nonnegative matrix $H$, usually with much fewer columns than $A$, such that $A \approx HH^T$.  SymNMF can be used for data analysis and in particular for various clustering tasks. In this paper, we propose simple and very efficient coordinate descent schemes to solve this problem, and that can handle large and sparse input matrices. 
The effectiveness of our methods is illustrated on synthetic and real-world data sets, and we show that they perform favorably compared to recent state-of-the-art methods. 
\end{abstract}

\textbf{Keywords.} symmetric nonnegative matrix factorization, coordinate descent, completely positive matrices.

\section{Introduction}
%
%
%
%
Nonnegative matrix factorization (NMF) has become a standard technique in data mining by providing low-rank decompositions of nonnegative matrices:
given a nonnegative matrix $X\in\mathbb{R}^{m\times n}_+$ and an integer $r < \min(m,n)$, the problem is to find  $W\in\mathbb{R}^{m\times r}_+$ and $H\in\mathbb{R}^{n \times r}_+$ such that 
$X\approx WH^T$. 
In many applications, the nonnegativity constraints lead to a sparse and part-based representation, 
and a better interpretability of the factors, e.g., when analyzing images or documents~\cite{LS99}. 

In this paper, we work on a special case of NMF where the input matrix is a symmetric matrix~$A$.
Usually, the matrix $A$ will be a \textit{similarity matrix} where the $(i,j)$th entry is a measure of the similarity between the $i$th and the $j$th data points. 
This is a rather general framework, and the user can decide how to generate the matrix $A$ from his data set by selecting an appropriate metric to compare two data points. 
As opposed to NMF, we are interested in a symmetric approximation $HH^T$ with the factor $H$ being nonnegative--hence symNMF is an NMF variant with $W = H$.  
If the data points are grouped into clusters, 
 each rank-one factor $H(:,j)H(:,j)^T$ will ideally correspond to a cluster present in the data set. 
In fact, symNMF has been used successfully in many different settings and was proved to compete with standard clustering techniques 
such as normalized cut, spectral clustering, $k$-means and spherical $k$-means; 
see~\cite{zass2005unifying, long2007relational, chen2008non, yang2012clustering, kuang2012symmetric, kuang2013symnmf, yglcw13} and the references therein.  

SymNMF also has tight connections with completely positive matrices~\cite{BS03, kalofolias2012computing}, that is, matrices of the form $A = HH^T, H \geq 0$, which play an important role in combinatorial optimization~\cite{Bu09}. Note that the smallest $r$ such that such a factorization exists is called the cp-rank of $A$. 
The focus of this paper is to provide efficient methods to compute good symmetric and 
nonnegative low-rank approximations $HH^T$ with $H \geq 0$ of a given nonnegative symmetric matrix $A$. 

Let us describe our problem more formally. 
Given a $n$-by-$n$ symmetric nonnegative matrix $A$ and a factorization rank $r$, symNMF looks for an $n$-by-$r$ nonnegative matrix $H$ such that $A \approx HH^T$.
The error between $A$ and its approximation $HH^T$ can be measured in different ways but we focus in this paper on the Frobenius norm: 
\begin{equation} \label{symNMF}
\min_{H \geq 0} \; F(H) \equiv  \frac{1}{4}\left\| A - HH^T \right\|_F^2 \; , 
\end{equation}
which is arguably the most widely used in practice. Applying standard non-linear optimization schemes to~\eqref{symNMF}, one can only hope to obtain stationary points, since the objective function of \eqref{symNMF} is highly non-convex, and the problem is NP-hard~\cite{DG14}. 
For example, two such methods to find approximate solutions to~\eqref{symNMF} were proposed in~\cite{kuang2013symnmf}:  
\begin{enumerate}

\item The first method is a Newton-like algorithm which exploits some second-order information without the prohibitive cost of the full Newton method. Each iteration of the algorithm has a computational complexity of $O(n^3r)$ operations. 

\item The second algorithm is an adaptation of the alternating nonnegative least squares (ANLS) method 
for NMF~\cite{kim2008toward, kim2011fast} where the term $||W - H||_F^2$ penalizing the difference between the two factors in NMF is added to the objective function. That same idea was used in~\cite{Ho2008} where the author developed two methods to solve this penalized problem but without any available implementation or comparison.

\end{enumerate}

In this paper, we analyze coordinate descent (CD) schemes for~\eqref{symNMF}. Our motivation is that the most efficient methods for NMF are CD methods; see \cite{CP09b,li2009fastnmf,GG12,hsieh2011fast} and the references therein. 
The reason behind the success of CD methods for NMF is twofold: 
(i) the updates can be written in closed-form and are very cheap to compute, and 
(ii) the interaction between the variables is low because many variables are expected to be equal to zero at a stationary point~\cite{NG11}. \\

The paper is organized as follows. 
In section~\ref{sec2}, we focus on the rank-one problem and present the general framework to implement an exact CD method for symNMF. 
The main proposed algorithm is described in section~\ref{sec3}. 
Section~\ref{init} discusses initialization and convergence issues. 
Section~\ref{sec5} presents extensive numerical experiments on synthetic and real data sets, which shows that our CD methods perform competitively with recent state-of-the-art techniques for symNMF. 

\section{Exact coordinate descent methods for SymNMF} \label{sec2}

Exact coordinate descent (CD) techniques are among the most intuitive methods to solve optimization problems. 
At each iteration, all variables are fixed but one, and that variable is updated to its optimal value. 
The update of one variable at a time is often computationally cheap and easy to implement.
However little interest was given to these methods until recently when CD approaches were shown competitive for certain classes of problems; see~\cite{wright2015coordinate} for a recent survey. 
In fact, more and more applications are using CD approaches, especially in machine learning when dealing with large-scale problems.

Let us derive the exact cyclic CD method for symNMF. The approximation $HH^T$ of the input matrix $A$ can be written as the sum of $r$ rank-one symmetric matrices:
\begin{equation}
A \; \approx \; \sum_{k=1}^r H_{:,k}H_{:,k}^T,
\end{equation}
where $H_{:,k}$ is the $k$th column of $H$. If we assume that all columns of $H$ are known except for the $j$th, the problem comes down to approximate a residual symmetric matrix $R^{(j)}$ with a rank-one nonnegative symmetric matrix $H_{:,j}H_{:,j}^T$:
\begin{equation} \label{symNMF-residual}
\min_{H_{:,j} \geq 0} \; \left\| R^{(j)} - H_{:,j}H_{:,j}^T \right\|_F^2 \; ,  
\end{equation}
where 
\begin{equation} \label{residual}
R^{(j)} \; = \; A-\sum_{k=1,k\neq j}^rH_{:,k}H_{:,k}^T\; .
\end{equation} 
For this reason and to simplify the presentation, we only consider the rank-one subproblem in the following section~\ref{r1subsec}, before presenting on the overall procedure in section~\ref{allsubsec}.

\subsection{Rank-one Symmetric NMF} \label{r1subsec}

Given a $n$-by-$n$ symmetric matrix $P \in \mathbb{R}^{n\times n}$, let us consider the rank-one symNMF problem 
\begin{equation} \label{r1symNMF}
\min_{h \geq 0} \; f(h) \equiv \frac{1}{4} \left\| P - h h^T \right\|_F^2 \; , 
\end{equation}
where $h\in\mathbb{R}^n_+$. 
If all entries of $P$ are nonnegative, the problem can be solved for example with the truncated singular value decomposition; this follows from the Perron-Frobenius and Eckart-Young theorems. 
In our case, the residuals $R^{(j)}$ will in general have negative entries--see~\eqref{residual}--which makes the problem NP-hard in general~\cite{belachew2015solving}. 
The optimality conditions for~\eqref{r1symNMF} are given by 
\begin{equation} \label{KKTr1symNMF}
h \geq 0, \nabla f(h) \geq 0 \text{, and } h_i \, \nabla f(h)_i = 0 \text{ for all } i , 
\end{equation}
where $\nabla f(h)_i$ the $i$th component of the gradient $\nabla f(h)$. 
For any $1 \leq i \leq n$, the exact CD method consists in alternatively updating the variables in a cyclic way:  
$$
\text{for } i = 1, 2, \dots, n: \quad h_i\leftarrow h_i^+ , 
$$ 
where $h_i^+$ is the optimal value of $h_i$ in~\eqref{r1symNMF} when all other variables are fixed. 
Let us show how to compute $h_i^+$.  
We have:
\begin{equation} \label{gradient} 
\nabla f(h)_i 
= 
h_i^3+\underbrace{\left(\sum_{l=1,l\neq i}^n h_l^2-P_{ii}\right)}_{a_i}h_i\underbrace{-\sum_{l=1,l\neq i}h_l P_{li}}_{b_i}, 
\end{equation}
where
\begin{eqnarray}
a_i & = & \sum_{l=1,l\neq i}^n h_l^2-P_{ii} = \|h\|^2-h_i^2-P_{ii} \, , \text{ and }  \label{ai}\\
b_i & = & -\sum_{l=1,l\neq i}h_lP_{li} = h_iP_{ii} - h^TP_{:,i} \, . \label{bi}
\end{eqnarray} 
If all the variables but $h_i$ are fixed, 
by the complementary slackness condition~\eqref{KKTr1symNMF}, the optimal solution $h_i^+$ will be either $0$ or a solution of the equation $\nabla f(h)_i = 0$, that is, a root of $x^3 + a_i x + b_i$. 
Since the roots of a third-degree polynomial can be computed in closed form, 
it suffices to first compute these roots and then evaluate $f(h)$ at these roots in order to identify the optimal solution $h_i^+$.  
The algorithm based on Cardano's method (see for example \cite{cardano1968ars}) is described as Algorithm~\ref{alg:bestroot} and runs in $O(1)$ time. Therefore, given that $a_i$ and $b_i$ are known, $h_i^+$ can be computed in $O(1)$ operations.

\begin{algorithm} 
\caption{$x=BestPolynomialRoot(a,b)$}
\label{alg:bestroot} 
\begin{algorithmic}[1]

\STATE INPUT: $a \in \mathbb{R}, b \in \mathbb{R}$
\STATE OUTPUT: $\arg \min_{x}$ $\frac{x^4}{4}+\frac{ax^2}{2}+bx$ such that $x\geq 0$.
\STATE $\Delta = 4a^3+27b^2$
\STATE $d = \frac{1}{2}\left(-b+\sqrt{\frac{\Delta}{27}}\right)$
\IF{$\Delta\leq 0$}
\STATE $r = 2\sqrt[3]{|d|}$
\STATE $\theta = \frac{phase_{}angle(d)}{3}$ 
\STATE $z^{\ast}=0$, $y^{\ast}=0$
\FOR{$k=0:2$}
\STATE $z = r\cos\left(\theta + \frac{2k\pi}{3}\right)$
\IF{$z\geq 0$ \AND $\frac{z^4}{4}+a\frac{z^2}{2}+bz < y^{\ast}$}
\STATE $z^{\ast}=z$
\STATE $y^{\ast}=\frac{z^4}{4}+a\frac{z^2}{2}+bz$ 
\ENDIF
\ENDFOR
\STATE $x = z^{\ast}$
\ELSE
\STATE $z = \sqrt[3]{d}+\sqrt[3]{\frac{1}{2}\left(-b-\sqrt{\frac{\Delta}{27}}\right)}$ 
\IF{$z\geq 0$ \AND $\frac{z^4}{4}+a\frac{z^2}{2}+bz<0$}
\STATE $x=z$
\ELSE
\STATE $x=0$
\ENDIF
\ENDIF
\end{algorithmic}
\end{algorithm}

The only inputs of Algorithm~\ref{alg:bestroot} are the quantities \eqref{ai} and \eqref{bi}.
However, the variables in~\eqref{r1symNMF} are not independent. 
When $h_i$ is updated to $h_i^+$, the partial derivative of the other variables, that is, the entries of $\nabla f(h)$, must be updated. 
For $l \in \{i+1,...,n\}$, we update: 
\begin{equation} 
a_l \leftarrow a_l + (h_i^+)^2 - h_i^2 \quad \text{ and } \quad b_l \leftarrow b_l + P_{li} (h_i^+-h_i).  \label{ai+bi+}
\end{equation} 
This means that updating one variable will cost $O(n)$ operations due to the necessary run over the coordinates of $h$ for updating the gradient. (Note that we could also simply evaluate the $i$th entry of the gradient when updating $h_i$, which also requires $O(n)$ operations; see section~\ref{sec3}.)  
Algorithm~\ref{CDr1SymNMF} describes one iteration of CD applied on problem~\eqref{r1symNMF}. In other words, if one wants to find a stationary point of problem~\eqref{r1symNMF}, Algorithm~\ref{CDr1SymNMF} should be called until convergence, and this would correspond to applying a cyclic coordinate descent method to~\eqref{r1symNMF}. 
In lines \ref{line:alg2-debutprecomp}-\ref{line:alg2-finprecomp}, the quantities $a_i$'s and $b_i$'s are precomputed. 
Because of the product $h^T P_{:,i}$ needed for every $b_i$, it takes $O(n^2)$ time.
Then, from line \ref{line:alg2-debutfor} to line \ref{line:alg2-finfor}, 
Algorithm \ref{alg:bestroot} is called for every variable and is followed by the updates described by~\eqref{ai+bi+}. 
Finally, Algorithm~\ref{CDr1SymNMF} has a computational cost of $O(n^2)$ operations.  Note that we cannot expect a lower computational cost since computing the gradient (and in particular the product $Ph$) requires $O(n^2)$ operations. 

\begin{algorithm}
\caption{$h=rankoneCDSymNMF(P, h_0)$}
\label{CDr1SymNMF}
\begin{algorithmic}[1]
\STATE INPUT: $P \in \mathbb{R}^{n\times n}, h_0 \in \mathbb{R}^{n}$
\STATE OUTPUT: $h \in \mathbb{R}^{n}_+$
\STATE $h = h_0$
\FOR{$i=1:n$} \label{line:alg2-debutprecomp}
\STATE $a_i = \|h\|^2_2 - h_{i}^2 - P_{ii}$ 
\STATE $b_i = h_{i}P_{ii}-h^TP_{:,i}$ 
\ENDFOR \label{line:alg2-finprecomp}
\FOR{$i=1:n$} \label{line:alg2-debutfor}
\STATE $h_{i}^+ = BestPolynomialRoot(a_i,b_i)$ 
\FOR{$l > i$}
\STATE $a_l \leftarrow a_l + (h_{i}^+)^2-h_{i}^2$ 
\STATE $b_l \leftarrow b_l + P_{li}(h_{i}^+-h_{i})$ 
\ENDFOR
\STATE $h_{i}=h_{i}^+$
\ENDFOR \label{line:alg2-finfor}
\end{algorithmic}
\end{algorithm}

\subsection{First exact coordinate descent method for SymNMF} \label{allsubsec}

To tackle SymNMF~\eqref{symNMF}, 
we apply Algorithm~\ref{CDr1SymNMF} on every column of $H$ successively, that is, we apply Algorithm~\ref{CDr1SymNMF} with $h = H(:,j)$ and $P = R^{(j)}$ for $j=1,\dots,r$. 
The procedure is simple to describe, see Algorithm~\ref{generalCDSymNMF} which implements the exact cyclic CD method applied to SymNMF. 
\begin{algorithm} 
\caption{$H=generalCDSymNMF(A,H_0)$}
\label{generalCDSymNMF}
\begin{algorithmic}[1]
\STATE INPUT: $A \in \mathbb{R}^{n\times n}, H_0 \in \mathbb{R}^{n\times r}$
\STATE OUTPUT: $H \in \mathbb{R}^{n\times r}_+$
\STATE $H = H_0$
\STATE $R=A-HH^T$ \label{alg3-full}
\WHILE{stopping criterion not satisfied}
\FOR{$j=1:r$}
\STATE $R^{(j)} \leftarrow R + H_{:,j}H_{:,j}^T$ \label{alg3-residual1}
\STATE $H_{:,j} \leftarrow rankoneCDSymNMF(R^{(j)}, H_{:,j})$ \label{alg3-callalg2}
\STATE $R \leftarrow R^{(j)}-H_{:,j}H_{:,j}^T$ \label{alg3-residual2}
\ENDFOR
\ENDWHILE
\end{algorithmic}
\end{algorithm} 
One can easily check that Algorithm~\ref{generalCDSymNMF} requires $O(n^2r)$ operations to update the $nr$ entries of $H$ once: 
\begin{itemize}

\item In step \ref{alg3-full}, the full residual matrix $R = A - HH^T$ is precomputed where the product $HH^T$ requires $O(rn^2)$ operations. 

\item In step \ref{alg3-residual1}, the residual matrix $R^{(j)}$ can be computed using the fact that $R^{(j)} = R + H_{:,j} H_{:,j}^T$, which requires $O(n^2)$ operations.  

\item In step \ref{alg3-callalg2}, Algorithm~\ref{CDr1SymNMF} is called, and requires $O(n^2)$ operations. 

\item In step \ref{alg3-residual2}, the full residual matrix $R = R^{(j)} - H_{:,j} H_{:,j}^T$ is updated, which requires $O(n^2)$ operations.  
\end{itemize}

Algorithm~\ref{generalCDSymNMF} has some drawbacks. In particular, the heavy computation of the residual matrix $R$ is unpractical for large sparse matrices (see below). 
In the next sections, we show how to tackle these issues and propose a more efficient CD method for symNMF, applicable to large sparse matrices.

%
%
%

\section{Improved Implementation of Algorithm~\ref{generalCDSymNMF}}  \label{sec3}

The algorithm for symNMF developed in the previous section (Algorithm~\ref{generalCDSymNMF}) is unpractical when the input matrix $A$ is large and sparse; in the sense that although $A$ can be stored in memory, Algorithm~\ref{generalCDSymNMF} will run out of memory for $n$ large. 
In fact, the residual matrix $R$ with $n^2$ entries computed in step \ref{alg3-full} of Algorithm~\ref{generalCDSymNMF} is in general dense (for example if the entries of $H$ are initialized to some positive entries--see section~\ref{init}), even if $A$ is sparse. Sparse matrices usually have $O(n)$ non-zero entries and, when $n$ is large, it is unpractical to store $O(n^2)$ entries (this is for example typical for document data sets where $n$ is of the order of millions).  

In this section we re-implement Algorithm~\ref{generalCDSymNMF} in order to avoid the explicit computation of the residual matrix $R$; see Algorithm~\ref{alg:cyclicCD}. 
While Algorithm~\ref{generalCDSymNMF} runs in $O(r n^2)$ operations per iteration and requires $O(n^2)$ space in memory (whether or not $A$ is sparse),  
Algorithm~\ref{alg:cyclicCD} runs in $O(r\max(K,nr))$ operations per iteration and requires $O(\max(K,nr))$ space in memory, where $K$ is the number of non-zero entries of $A$. Hence, 
\begin{itemize}

\item When $A$ is dense, $K = O(n^2)$ and Algorithm~\ref{alg:cyclicCD} will have the same asymptotic computational cost of $O(r n^2)$ operations per iteration as Algorithm~\ref{generalCDSymNMF}. 
However, it performs better in practice because the exact number of operations is smaller. 

\item When $A$ is sparse, $K = O(n)$ and Algorithm~\ref{alg:cyclicCD} runs in $O(r^2 n)$ operations per iteration, 
which is significantly smaller than Algorithm~\ref{generalCDSymNMF} in $O(r n^2)$, so that it will be applicable to very large sparse matrices. 
In fact, in practice, $n$ can be of the order of millions while $r$ is usually smaller than a hundred. 
This will be illustrated in section~\ref{sec5} for some numerical experiments on text data sets. 

\end{itemize}

In the following, we first assume that $A$ is dense when accounting for the computational cost of Algorithm~\ref{alg:cyclicCD}. 
Then, we show that the computational cost is significantly reduced when $A$ is sparse. 
Since we want to avoid the computation of the residual~\eqref{residual}, 
reducing the problem into rank-one subproblems solved one after the other is not desirable. 
To evaluate the gradient of the objective function in~\eqref{symNMF} for the $(i,j)$th entry of $H$, 
we need to modify the expressions \eqref{ai} and \eqref{bi} by substituting $R^{(j)}$ with $A-\sum_{k=1,k\neq j}^r H_{:,k}H_{:,k}^T$. 
We have 
\[
\nabla_{H_{ij}} F(H) = \nabla_{H_{ij}} \left(\frac{1}{4} ||A-HH^T||_F^2\right) =  H_{ij}^3 + a_{ij} H_{ij} +  b_{ij} , 
\]
where  
\begin{eqnarray}
	a_{ij} & = & \|H_{i,:}\|^2 + \|H_{:,j}\|^2 - 2H_{ij}^2 - A_{ii}, \text{ and } \label{aij} \\
	b_{ij} & = & H_{i,:}(H^TH)_{:,j} - H_{:,j}^TA_{:,i} - H_{ij}^3 - H_{ij}a_{ij}. \label{bij}
\end{eqnarray} 
The quantities $a_{ij}$ and $b_{ij}$ will no longer be updated during the iterations as in Algorithm \ref{generalCDSymNMF}, but rather computed on the fly before each entry of $H$ is updated. The reason is twofold:
\begin{itemize}
	\item it avoids storing two $n$-by-$r$ matrices,  and  
	\item the updates of the $b_{ij}$'s, as done in \eqref{ai+bi+}, cannot be performed in $O(n)$ operations without the matrix $R^{(j)}$.
\end{itemize}
However, in order to minimize the computational cost, the following quantities will be precomputed and updated during the course of the iterations: 
\begin{itemize}

\item $\|H_{i,:}\|^2$ for all $i$ and $\|H_{:,j}\|^2$ for all $j$:  if the values of $\|H_{i,:}\|^2$ and $\|H_{:,j}\|^2$ are available, $a_{ij}$ can be computed in $O(1)$; see \eqref{aij}. 
Moreover, when $H_{ij}$ is updated to its optimal value $H_{ij}^+$, we only need to update $\|H_{i,:}\|^2$ and $\|H_{:,j}\|^2$ which can also be done in $O(1)$:
\begin{equation}
	\|H_{i,:}\|^2 \leftarrow \|H_{i,:}\|^2 + (H_{ij}^+)^2 - H_{ij}^2, \label{aij+}
\end{equation}
\begin{equation}
	\|H_{:,j}\|^2 \leftarrow \|H_{:,j}\|^2 + (H_{ij}^+)^2 - H_{ij}^2. \label{bij+}
\end{equation} 
Therefore, pre-computing the $\|H_{i,:}\|^2$'s and $\|H_{:,j}\|^2$'s, which require $O(rn)$ operations, allows us to compute the $a_{ij}$'s in $O(1)$. 

\item The $r$-by-$r$ matrix $H^TH$: by maintaining $H^TH$, computing $H_{i,:}(H^TH)_{:,j}$ requires $O(r)$ operations. 
Moreover, when the $(i,j)$th entry of $H$ is updated to $H_{ij}^+$, updating $H^TH$ requires $O(r)$ operations: 
\begin{multline} 
	(H^TH)_{jk} \leftarrow (H^TH)_{jk} - H_{ik}(H_{ij}^+-H_{ij}), \hspace{0.4cm} \\k=1,...,r. \label{HH}
\end{multline}
\end{itemize} 
To compute $b_{ij}$, we also need to perform the product $H_{:,j}^TA_{:,i}$; see \eqref{bij}.
This requires $O(n)$ operations, which cannot be avoided and is the most expensive part of the algorithm.

In summary, by precomputing the quantities $\|H_{i,:}\|^2$, $\|H_{:,j}\|^2$ and $H^TH$, it is possible to apply one iteration of CD over the $nr$ variables in $O(n^2r)$ operations.  
The computational cost is the same as in Algorithm \ref{generalCDSymNMF}, in the dense case, but no residual matrix is computed; 
see Algorithm \ref{alg:cyclicCD}. 
\begin{algorithm}
\caption{$H=cyclicCDSymNMF(A,H_0)$}
\label{alg:cyclicCD}
\begin{algorithmic}[1]
\STATE INPUT: $A \in \mathbb{R}^{n\times n}, H_0 \in \mathbb{R}^{n\times r}$
\STATE OUTPUT: $H \in \mathbb{R}^{n\times r}$
\STATE $H=H_0$
\FOR{$j=1:r$} \label{line:debutprecomp}
\STATE $C_j = \|H_{:,j}\|^2$
\ENDFOR
\FOR{$i=1:n$}
\STATE $L_i = \|H_{i,:}\|^2$
\ENDFOR
\STATE $D = H^TH$ \label{line:finprecomp} 
\WHILE{stopping criterion not satisfied}
\FOR{$j=1:r$} \label{line:j1r}
\FOR{$i=1:n$} \label{line:i1n}
\STATE $a_{ij} \leftarrow C_j + L_i - 2H_{ij}^2 - A_{ii}$
\STATE $b_{ij} \leftarrow H_{i,:}^T(D)_{j,:} - H_{:,j}^TA_{:,i} - H_{ij}^3 - H_{ij}a_{ij}$ \label{line:calculbij}
\STATE $H_{ij}^+ \leftarrow BestPolynomialRoot(a_{ij},b_{ij})$
\STATE $C_j \leftarrow C_j + (H_{ij}^+)^2-H_{ij}^2$
\STATE $L_i \leftarrow L_i + (H_{ij}^+)^2-H_{ij}^2$
\STATE $D_{j,:} \leftarrow D_{j,:} - H_{i,:}(H_{ij}^+-H_{ij})$
\STATE $D_{:,j} \leftarrow D_{j,:}$
\ENDFOR
\ENDFOR
\ENDWHILE
\end{algorithmic}
\end{algorithm}

From line \ref{line:debutprecomp} to line \ref{line:finprecomp}, the precomputations are performed in $O(nr^2)$ time where computing $H^TH$ is the most expensive part.  
Then the two loops iterate over all the entries to update each variable once.
Computing $b_{ij}$ (in line \ref{line:calculbij}) is the bottleneck of the CD scheme as it is the only part in the two loops which requires $O(n)$ time. 
However, when the matrix $A$ is sparse, the cost of computing $H_{:,j}^TA_{:,i}$ for all $i$, that is computing $H_{:,j}^T A$, drops to $O(K)$ where $K$ is the number of nonzero entries in $A$. 
Taking into account the term $H_{i,:} (H^T H)_{j,:}$ to compute $b_{ij}$ that requires $O(r)$ operations, we have that Algorithm \ref{alg:cyclicCD} requires $O(r \max(K,nr) )$ operations per iteration. 



\section{Initialization and Convergence}  \label{init}

In this section, we discuss initialization and convergence of Algorithm~\ref{alg:cyclicCD}. 
We also provide a small modification for Algorithm~\ref{alg:cyclicCD} to perform better (especially when random initialization is used).

\paragraph{Initialization}
 
In most previous works, the matrix $H$ is initialized randomly, using the uniform distribution in the interval [0,1] for each entry of $H$~\cite{kuang2013symnmf}. Note that, in practice, to obtain an unbiased initial point, the matrix $H$ should be multiplied by a constant $\beta^*$ such that 
\begin{multline} \label{scale}
\beta^* = \arg\min_{\beta \geq 0} || A - (\beta H_0) (\beta H_0)^T||_F \\
= \sqrt{ \frac{\langle A , H_0 H_0^T \rangle}{\langle H_0 H_0^T, H_0 H_0^T \rangle} }
= \sqrt{ \frac{\langle A H_0, H_0 \rangle}{||H_0^T H_0||_F^2} }. 
\end{multline} 
This allows the initial approximation $H_0 H_0^T$ to be well scaled compared to $A$. When using such an initialization, we observed that using random shuffling of the columns of $H$ before each iteration (that is, optimizing the columns of $H$ in a different order each time we run Algorithm~\ref{alg:cyclicCD}) performs in general much better; see section~\ref{sec5}. 


\begin{remark}[Other heuristics to accelerate coordinate descent methods]
During the course of our research, we have tried several heuristics to accelerate Algorithm~\ref{alg:cyclicCD}, including three of the most popular strategies: 
\begin{itemize}

\item \emph{Gauss-Southwell strategies}. We have updated the variables by ordering them according to some criterion (namely, the decrease of the objective function, and the magnitude of the corresponding entry of the gradient). 

\item \emph{Variable selection}. Instead of optimizing all variables at each step, we carefully selected a subset of the variables to optimize at each iteration (again using a criterion based on the decrease of the objective function or the magnitude of the corresponding entry of the gradient). 

\item \emph{Random shuffling}. We have shuffled randomly the order in which the variables are updated in each column. This strategy was shown to be superior in several context, although a theoretical understanding of this phenomenon remains elusive~\cite{wright2015coordinate}. 

\end{itemize}
However, these heuristics (and combinations of them) would not improve significantly the effectiveness of Algorithm~\ref{alg:cyclicCD} hence we do not present them here. 
\end{remark}

Random initialization might not seem very reasonable, especially for our CD scheme. In fact, at the first step of our CD method, the optimal values of the entries of the first column $H_{:,1}$ of $H$ are computed sequentially, trying to solve
\[
\min_{H_{:,1} \geq 0} || R^{(1)} -  H_{:,1} H_{:,1}^T||_F^2 \qquad \text{ with } R^{(1)} = A - \sum_{k = 2}^r H_{:,k} H_{:,k}^T. 
\]
Hence we are trying to approximate a matrix $R^{(1)}$ which is the difference between $A$ and a randomly generated matrix $\sum_{k = 2}^r H_{:,k} H_{:,k}^T$: this does not really make sense. In fact, we are trying to approximate a matrix which is highly perturbed with a randomly generated matrix. 

It would arguably make more sense to initialize $H$ at zero, so that, when optimizing over the entries of $H_{:,1}$ at the first step, we only try to approximate the matrix $A$ itself. 
It turns out that this simple strategy allows to obtain a faster initial convergence than the random initialization strategy.  However, we observe the following: this solution tends to have a very particular structure where the first factor is dense and the next ones are sparser. 
The explanation is that the first factor is given more importance since it is optimized first hence it will be close to the best rank-one approximation of $A$, which is in general positive 
(if $A$ is irreducible, by Perron-Frobenius and Eckart-Young theorems). 
Hence initializing $H$ at zero tends to produce unbalanced factors. 
However, this might be desirable in some cases as the next factors are in general significantly sparser than with random initialization. 
To illustrate this, let us perform the following numerical experiment: we use the CBCL face data set (see section~\ref{sec5}) that contains 2429 facial images, 19 by 19 pixels each. Let us construct the nonnegative matrix $X \in \mathbb{R}^{361 \times 2429}$ where each column is a vectorized image. 
Then, we construct the matrix $A = XX^T \in \mathbb{R}^{361 \times 361}$ that contains the similarities between the pixel intensities among the facial images. 
Hence symNMF of $A$ will provide us with a matrix $H$ where each column of $H$ corresponds to a `cluster' of pixels sharing some similarities. Figure~\ref{CBCLzerorand} shows the columns of $H$ obtained (after reshaping them as images) with zero initialization (left) and random initialization (right) with $r=49$ as in \cite{LS99}. 
We observe that the solutions are very different, although the relative approximation error $||A-HH^T||_F/||A||_F$ are similar 
(6.2\% for zero initialization vs. 7.5\% for random initialization, after 2000 iterations). 
Depending on the application at hand, one of the two solutions might be more desirable: for example, for the CBCL data set, it seems that the solution obtained with zero initialization is more easily interpretable as facial features, while with the random initialization it can be interpreted as average/mean faces.   
\begin{figure*}
	\begin{center}
	\begin{tabular}{cc}
	\includegraphics[width=6cm]{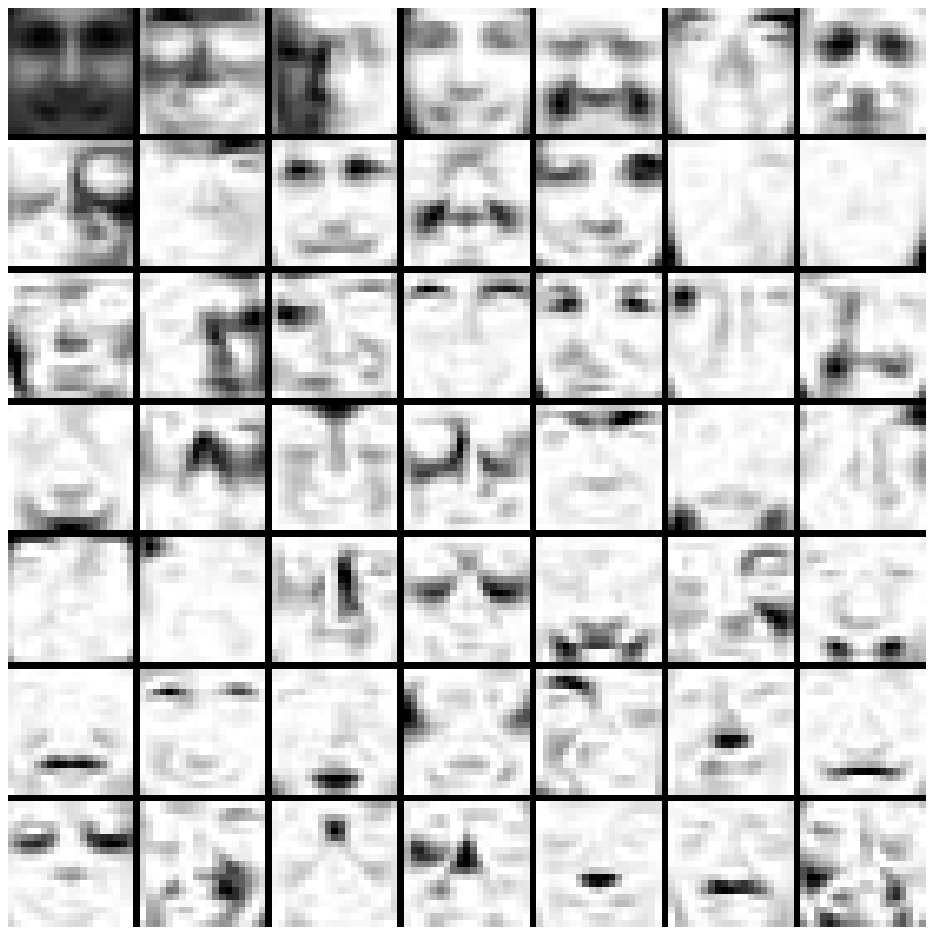} 
	 &  
	\includegraphics[width=6cm]{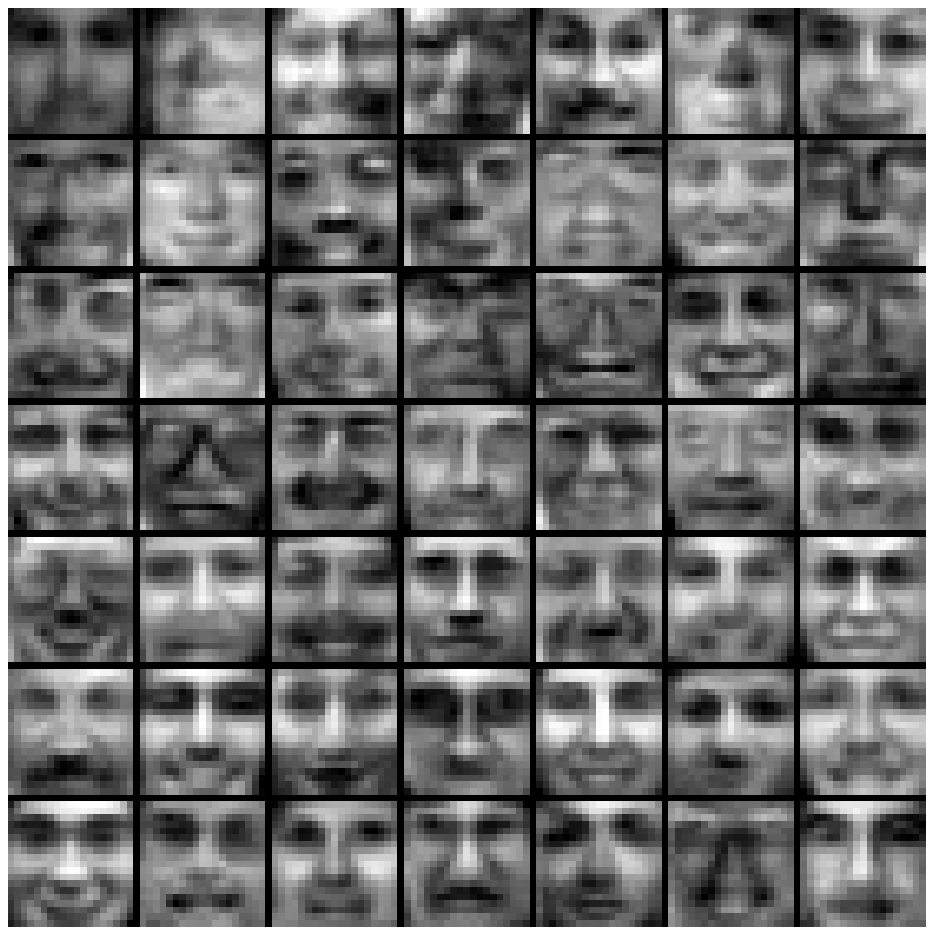} \\
	\end{tabular} 
\end{center}
\caption{Comparison of the basis elements obtained with symNMF on the CBCL data set ($r = 49$) 
with (left) zero initialization and (right) random initialization.} 
\label{CBCLzerorand} 
\end{figure*}

This example also illustrates the sensitivity of Algorithm~\ref{alg:cyclicCD} to initialization: different initializations can lead to very different solutions. This is an unavoidable feature for any algorithm trying to find a good solution to an NP-hard problem at a relatively low computational cost.  

Finally, we would like to point out that the ability to initialize our algorithm at zero is a very nice feature. 
In fact, since $H = 0$ is a (first-order) stationary point of~\eqref{symNMF}, \emph{this shows that our coordinate descent method can escape some first-order stationary points, because it uses higher-order information}. 
For example, any gradient-based method cannot be initialized at zero (the gradient is 0), 
also the ANLS-based algorithm from~\cite{kuang2013symnmf} cannot escape from zero.

\paragraph{Convergence} 

By construction, the objective function is nonincreasing under the updates of Algorithm~\ref{alg:cyclicCD} while it is bounded from below. 
Moreover, since our initial estimate $H_0$ is initially scaled~\eqref{scale}, we have $||A - H_0H_0^T||_F \leq ||A||_F$ and therefore any iterate $H$ of Algorithm~\ref{alg:cyclicCD} satisfies 
\[
||HH^T||_F - ||A||_F \leq ||A - HH^T||_F \leq ||A - H_0H_0^T||_F \leq ||A||_F . 
\]
Since $H \geq 0$, we have for all $k$ 
\[
 ||H_{:k}H_{:k}^T||_F
\leq 
||\sum_{k=1}^r H_{:k}H_{:k}^T||_F
= 
||HH^T||_F, 
\] 
which implies that $||H_{:k}||_2 \leq \sqrt{2||A||_F}$ for all $k$ hence all iterates of Algorithm~\ref{alg:cyclicCD} belong in a compact set. 
Therefore, Algorithm~\ref{alg:cyclicCD} generates a converging subsequence (Bolzano-Weierstrass theorem).  
(Note that, even if the initial iterate is not scaled, all iterates belong to a compact set, replacing $2||A||_F$ by $||A||_F + ||A - H_0H_0^T||_F$.) 

Unfortunately, in its current form, it is difficult to prove convergence of our algorithm to a stationary point. 
In fact, to guarantee the convergence of an exact cyclic coordinate method to a stationary point, three sufficient conditions are 
(i)~the objective function is continuously differentiable over the feasible set, 
(ii)~the sets over which the blocks of variables are updated are compact as well as convex\footnote{An alternative assumption to the condition (ii) under which the same result holds is when the function is monotonically nonincreasing in the interval from one iterate to the next~\cite{B99b}.}, and 
(iii)~the minimum computed at each iteration for a given block of variables is uniquely attained ; see Prop.~2.7.1 in~\cite{B99,B99b}. 
Conditions (i-ii) are met for Algorithm~\ref{alg:cyclicCD}. Unfortunately, it is not necessarily the case that the minimizer of a fourth order polynomial is unique. (Note however that for a randomly generated polynomial, this happens with probability 0. We have observed numerically that this in fact never happens in our numerical experiments, although there are counter examples.) 

A possible way to obtain convergence is to apply the maximum block improvement (MBI) method, that is, at each iteration, only update the variable that leads to the largest decrease of the objective function~\cite{CHLZ12}. Although this is theoretically appealing, this makes the algorithm computationally much more expensive hence much slower in practice. 
(A possible fix is to use MBI not for every iteration, but every $T$th iteration for some fixed $T$.) 

In all our numerical experiments, we have always observed that the sequence of iterates generated by Algorithm~\ref{alg:cyclicCD} converged to a unique limit point. In that case, we can prove that this limit point is a stationary point. 
\begin{theorem} \label{convCDsymNMF} 
Let $(H_{(0)}, H_{(1)}, \dots)$ be a sequence of iterates generated by Algorithm~\ref{alg:cyclicCD}. 
If that sequence converges to a unique accumulation point, it is a stationary point of symNMF~\eqref{symNMF}. 
\end{theorem}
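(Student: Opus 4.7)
The plan is to show that the unique accumulation point $H^*$ of the generated sequence satisfies the KKT conditions for~\eqref{symNMF}: $H^* \geq 0$, $\nabla F(H^*) \geq 0$, and $H^*_{ij}\,\nabla_{ij} F(H^*) = 0$ for all $(i,j)$. Nonnegativity of $H^*$ is immediate since every iterate is nonnegative and the nonnegative orthant is closed. The remaining two conditions require a careful analysis of the intra-sweep iterates. For each outer iteration $k+1$, let $\tilde H^{(k,i,j)}$ denote the matrix obtained from $H_{(k)}$ after the cyclic coordinate updates of Algorithm~\ref{alg:cyclicCD} up to and including entry $(i,j)$, so that $\tilde H^{(k,n,r)} = H_{(k+1)}$. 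Since each entry is touched exactly once per sweep, the change in entry $(i,j)$ equals $(H_{(k+1)})_{ij}-(H_{(k)})_{ij}$, whose absolute value is at most $\|H_{(k+1)}-H_{(k)}\|_F$. The hypothesis $H_{(k)}\to H^*$ forces $\|H_{(k+1)}-H_{(k)}\|_F\to 0$, hence every intermediate iterate $\tilde H^{(k,i,j)}$ also converges to $H^*$.

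Next I would exploit the exact one-dimensional optimality of each coordinate update. The algorithm replaces $H_{ij}$ with a minimizer $H^+_{ij}$ of the scalar function $x\mapsto \tfrac{x^4}{4}+\tfrac{a_{ij}}{2}x^2+b_{ij}\,x$ on $\{x\geq 0\}$, where $a_{ij}$, $b_{ij}$ depend on the current iterate. Regardless of whether this minimizer is unique, the first-order necessary conditions at a constrained scalar minimum give
\[
H^+_{ij}\geq 0,\qquad (H^+_{ij})^3+a_{ij}H^+_{ij}+b_{ij}\geq 0,\qquad H^+_{ij}\bigl((H^+_{ij})^3+a_{ij}H^+_{ij}+b_{ij}\bigr)=0.
\]
By~\eqref{aij}--\eqref{bij} the middle expression is exactly $\nabla_{ij} F\bigl(\tilde H^{(k,i,j)}\bigr)$. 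Using continuity of $\nabla F$ together with $\tilde H^{(k,i,j)}\to H^*$, I would pass to the limit in each of these three relations to obtain $H^*_{ij}\geq 0$, $\nabla_{ij} F(H^*)\geq 0$, and $H^*_{ij}\,\nabla_{ij} F(H^*)=0$. Doing this for every index pair $(i,j)$ yields the full KKT system, so $H^*$ is stationary.

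The main obstacle, and the reason the theorem needs to assume a unique accumulation point rather than mere boundedness of the sequence, is precisely the nonuniqueness of the scalar minimizer flagged after condition (iii) of the standard cyclic BCD theorem (Prop.~2.7.1 in~\cite{B99}). That theorem's proof argues along a subsequence converging to some limit, then invokes uniqueness of the per-block minimizer to control the \emph{next} block's iterate; this step can genuinely fail here because the quartic may admit two nonnegative global minimizers. The uniqueness hypothesis in the statement sidesteps this issue entirely: it collapses all intermediate iterates $\tilde H^{(k,i,j)}$ in every sweep to the same limit $H^*$, so no continuity of the (possibly set-valued) update map is needed, and the proof reduces to taking limits in the scalar KKT conditions that hold at each individual update, using only continuity of $\nabla F$.
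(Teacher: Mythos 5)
Your proof is correct, and it takes a genuinely different route from the paper. The paper argues by contradiction: if the limit $\bar H$ violated the conditions~\eqref{KKTr1symNMF}, smoothness of $F$ would give a feasible coordinate step $p E^{ij}$ with $F(\bar H + pE^{ij}) = F(\bar H) - \epsilon$; by continuity of $F$ (not $\nabla F$) the same trial step taken at the iterates $H_{(n_k)}$ where entry $(i,j)$ is about to be updated satisfies $F(H_{(n_k)} + pE^{ij}) \leq F(\bar H) - \epsilon/2$ for large $k$, and since the exact coordinate update does at least as well as this one feasible point, the objective decreases by at least $\epsilon/2$ between consecutive such iterates, contradicting boundedness below. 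You instead verify the KKT system directly: each scalar update satisfies the one-dimensional first-order conditions, and because $a_{ij}$ and $b_{ij}$ in~\eqref{aij}--\eqref{bij} do not depend on $H_{ij}$, the quantity $(H_{ij}^+)^3 + a_{ij}H_{ij}^+ + b_{ij}$ is exactly $\nabla_{ij}F$ evaluated at the post-update intermediate iterate; you then pass to the limit using continuity of $\nabla F$, having first shown (correctly, via the once-per-sweep structure) that all intra-sweep iterates inherit the limit $H^*$. The trade-offs: your argument is direct, avoids the $\epsilon$-bookkeeping, makes transparent exactly where the unique-limit hypothesis is used (to collapse the intermediate iterates onto $H^*$), and in fact needs only that each accepted coordinate value is a stationary point of the scalar restriction rather than its global minimizer; the paper's argument, which follows the template of~\cite{hsieh2011fast} for NMF, uses only monotone decrease plus the fact that exact minimization beats a single trial point, so it does not rely on any first-order characterization of the quartic subproblem and would survive reformulations where such a characterization is awkward. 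Your closing diagnosis of why Prop.~2.7.1 of~\cite{B99} fails here --- possible nonuniqueness of the nonnegative minimizer of the quartic --- matches the paper's own discussion preceding the theorem.
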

\begin{proof}
This proof follows similar arguments as the proof of convergence of exact cyclic CD for NMF~\cite{hsieh2011fast}.  
 Let $\bar{H}$ be the accumulation point of the sequence $(H_{(0)}, H_{(1)}, \dots)$, that is, 
\[
\lim_{k \rightarrow \infty} H_{(k)} \quad = \quad \bar{H}. 
\]
Note that, by construction, 
\[
F( H_{(1)} ) \geq F( H_{(2)} ) \geq \dots \geq F( \bar{H} ). 
\] 
Note also that we consider that only one variable has been updated between $H_{(k+1)}$ and $H_{(k)}$.

Assume $\bar{H}$ is not a stationary point of~\eqref{symNMF}: therefore, there exists $(i,j)$ such that 
\begin{itemize}
\item $\bar{H}_{i,j} = 0$ and $\nabla F(\bar{H})_{i,j} < 0$, or 
\item $\bar{H}_{i,j} > 0$ and $\nabla F(\bar{H})_{i,j} \neq 0$. 
\end{itemize}
In both cases, since $F$ is smooth, there exists $p \neq 0$ such that 
\[
F( \bar{H} + p E^{ij} ) =  F(  \bar{H} ) - \epsilon < F(  \bar{H} ) , 
\]
for some $\epsilon > 0$, where $E^{ij}$ is the matrix of all zeros except at the $(i,j)$th entry where it is equal to one and $\bar{H} + p E^{ij} \geq 0$.  

Let us define $(H_{(n_0)},H_{(n_1)}, \dots)$ a subsequence of $(H_{(0)}, H_{(1)}, \dots)$ as follows: 
  $H_{(n_k)}$ is the iterate for which the $(i,j)$th entry is updated to obtain $H_{(n_k+1)}$. Since Algorithm~\ref{alg:cyclicCD} updates  the entries of $H$ column by column, we have $n_k = (j-1)n+i-1+nrk$ for $k = 0,1,\dots$. 

By continuity of $F$ and the convergence of the  sequence $H_{(n_k)}$, 
there exists $K$ sufficiently large so that for all $k > K$: 
\begin{equation} \label{pr1}
F( H_{(n_k)} + p E^{ij} ) 
\; \leq \;   
F(  \bar{H} ) - \frac{\epsilon}{2} . 
\end{equation} 
In fact, the continuity of $F$ implies that for all $\xi > 0$, there exists $\delta > 0$ sufficiently small 
such that $||\bar{H} - H_{(n_k)}||_2 < \delta \Rightarrow |F(\bar{H}) - F(H_{(n_k)})| < \xi$. It suffices to choose $n_k$ sufficiently large so that $\delta$ is sufficiently small (since $H_{(n_k)}$ converges to $\bar{H}$) for the value $\xi = \epsilon/2$. 

Let us flip the sign of~\eqref{pr1} and add $F( H_{(n_k)})$ on both sides to obtain
\[
 F( H_{(n_k)} ) - F( H_{(n_k)} + p E^{ij} ) 
 \; \geq \;
 F( H_{(n_k)} ) - F(  \bar{H} ) + \frac{\epsilon}{2} . 
\]
By construction of the subsequence, the $(i,j)$th entry of $H_{(n_k)}$ is updated first 
(the other entries are updated afterward) to obtain $H_{(n_{k+1})}$ which implies that 
\[
F( H_{(n_{k+1})} ) \leq F( H_{(n_{k}+1)} ) \leq F( H_{(n_k)} + p E^{ij} ) 
\] 
hence 
\begin{eqnarray*}
F( H_{(n_k)} ) - F( H_{(n_{k+1})} ) 
& \geq &
 F( H_{(n_k)} ) - F( H_{(n_k)} + p E^{ij} ) 
 \\ 
 & \geq & F( H_{(n_k)} ) - F(  \bar{H} ) + \frac{\epsilon}{2} \\
 & \geq & \frac{\epsilon}{2}, 
\end{eqnarray*} 
since $F(  \bar{H} ) \leq F( H_{(n_k)} )$. We therefore have that for all $k > K$, 
\[
F( H_{(n_{k+1})} ) \leq F( H_{(n_k)} ) - \frac{\epsilon}{2}, 
\]
a contradiction since $F$ is bounded below. 
\end{proof} 

Note that Theorem~\ref{convCDsymNMF} is useful in practice since it can easily be checked whether Algorithm~\ref{alg:cyclicCD} converges to a unique accumulation point, plotting for example the norm between the different iterates. 



\section{Numerical results}
\label{sec5}
This section shows the effectiveness of Algorithm~\ref{alg:cyclicCD} on several data sets compared to the state-of-the-art techniques. It is organized as follows. 
In section~\ref{ds}, we describe the real data sets and, in section~\ref{testedalgo}, the tested symNMF algorithms.   
In section~\ref{set}, we describe the settings we use to compare the symNMF algorithms.   
In section~\ref{compstate}, we provide and discuss the experimental results.


\subsection{Data sets} \label{ds}

We will use exactly the same data sets as in~\cite{GG12}. Because of space limitation, we only give the results for one value of the factorization rank $r$, more numerical experiments are available on the arXiv version of this paper~\cite{VGLZD15}. 
In~\cite{GG12}, authors use four dense data sets and six sparse data sets to compare several NMF algorithms. 
In this section, we use these data sets to generate similarity matrices $A$ on which we compare the different symNMF algorithms.  
Given a nonnegative data set $X \in \mathbb{R}^{m \times n}_+$, we construct the symmetric similarity matrix $A = X^TX \in \mathbb{R}^{n \times n}_+$, so that the entries of $A$ are equal to the inner products between data points. 
Table~\ref{dip} summarizes the dense data sets, corresponding to widely used facial images in the data mining community. Table~\ref{dtm} summarizes the characteristics of the different sparse data sets, corresponding to document datasets and described in details in~\cite{ZG05}.  

\begin{center}
\begin{table}[h!]
\begin{center}
\caption{Image datasets. }
\label{dip}  
\begin{tabular}{|c|c|c|c|}
\hline 
Data &            $\#$ pixels &  $m$  &  $n$    \\ \hline \hline
ORL$^1$    &   $112 \times 92$  & 10304 & 400    \\ 
Umist$^2$  &   $112 \times 92$  & 10304 & 575    \\ 
CBCL$^3$ &  $19 \times 19$  & 361  & 2429     \\ 
Frey$^2$  &   $28 \times 20$ & 560 & 1965   \\ 
\hline
\end{tabular} \\
\begin{flushleft}
\footnotesize
$^1$ \url{http://www.cl.cam.ac.uk/research/dtg/attarchive/facedatabase.html}\\
$^2$ \url{http://www.cs.toronto.edu/~roweis/data.html}\\
$^3$ \url{http://cbcl.mit.edu/cbcl/software-datasets/FaceData2.html} 
\end{flushleft}
\end{center}
\end{table}
\end{center}

\begin{center}
\begin{table}[h!] 
\begin{center}
\caption{Text mining data sets  (sparsity is given as the percentage of zeros).} 
\label{dtm}
\begin{tabular}{|c|c|c|c|c|c|c|} 
\hline
Data &   $m$  &  $n$ &    $\#\text{nonzero}$ & sparsity $X$  & sparsity $X^TX$ \\ \hline \hline
classic &  7094   & 41681  & 223839  & 99.92 &  99.50 \\ 
sports &    8580 & 14870  & 1091723 & 99.14 & 84.51 \\ 
reviews &   4069  & 18483  & 758635  & 98.99 & 84.24 \\ 
hitech &    2301 & 10080  & 331373 & 98.57& 80.32 \\ 
ohscal &   11162  & 11465  & 674365 & 99.47 & 91.58  \\ 
la1 &    3204 & 31472  & 484024  & 99.52 & 95.72 \\ 
\hline
\end{tabular}
\end{center}
\end{table}
\end{center}

\subsection{Tested symNMF algorithms} \label{testedalgo}

We compare the following algorithms  
\begin{enumerate} 
\item (Newton) This is the Newton-like method from~\cite{kuang2013symnmf}. 

\item (ANLS) This is the method based on the ANLS method for NMF adding the penalty $||W-H||_F^2$ in the objective function (see Introduction) from~\cite{kuang2013symnmf}. Note that ANLS has the drawback to depend on a parameter that is nontrivial to tune, namely, the penalty parameter for the term $||W-H||_F^2$ in the objective function (we used the default tuning strategy recommended by the authors).

\item (tSVD) This method, recently introduced in~\cite{HSS14}, first computes the rank-$r$ truncated SVD of $A \approx A_r = U_r \Sigma_r U_r^T$ where $U_r$ contains the first $r$ singular vectors of $A$ and $\Sigma_r$ is the $r$-by-$r$ diagonal matrix containing the first $r$ singular values of $A$ on its diagonal. 
Then, instead of solving~\eqref{symNMF}, the authors solve a `closeby' optimization problem replacing $A$ with $A_r$
\[
\min_{H \geq 0} || A_r - HH^T ||_F . 
\] 
Once the truncated SVD is computed, each iteration of this method is extremely cheap as the main computational cost is in a matrix-matrix product $B_r Q$, where $B_r = U_r \Sigma_r^{1/2}$ and $Q$ is an $r$-by-$r$ rotation matrix, which can be computed in $O(n r^2)$ operations. 
Note also that they use the initialization $H_0 = \max(0, B_r)$ --we flipped the signs of the columns of $U_r$ to maximize the $\ell_2$ norm of the nonnegative part~\cite{BAK08}.  

\item (BetaSNMF) This algorithm is presented in~\cite[Algorithm~4]{he2011symmetric}, and is based on multiplicative updates (similarly as for the original NMF algorithm proposed by Lee and Seung~\cite{LS2}). 
Note that we have also implemented the multiplicative update rules from~\cite{yang2012quadratic} (and already derived in~\cite{long2007relational}). 
However, we do not report the numerical results here because it was outperformed by BetaSNMF in all our numerical experiments, an observation already made in \cite{he2011symmetric}. 


\item (CD-X-Y) This is Algorithm~\ref{alg:cyclicCD}. 
X is either `Cyclic' or `Shuffle' and indicates whether the columns of $H$ are optimized in a cyclic way or if they are shuffled randomly before each iteration. 
Y is for the initialization: Y is `rand' for random initialization and is `0' for zero initialization; see section~\ref{init} for more details. Hence, we will compare four variants of Algorithm~\ref{alg:cyclicCD}: 
CD-Cyclic-0, CD-Shuffle-0, CD-Cyclic-Rand and CD-Shuffle-Rand.  

Because Algorithm~\ref{alg:cyclicCD} requires to perform many loops ($nr$ at each step), 
Matlab is not a well-suited language. 
Therefore, we have developed a C implementation, that can be called from Matlab (using Mex files). 
Note that the algorithms above are better suited for Matlab since the main computational cost resides in matrix-matrix products, and in solving linear systems of equations (for ANLS and Newton).  

\end{enumerate} 

Newton and ANLS are both available from \url{http://math.ucla.edu/~dakuang/}, while we have implemented tSVD and BetaSNMF ourselves. 

For all algorithms using random initializations for the matrix $H$, we used the same initial matrices. Note however that, in all the figures presented in this section, we will display the error after the first iteration, which is the reason why the curves do not start at the same value.

\subsection{Experimental setup} \label{set}

In order to compare for the average performance of the different algorithms, 
we denote $e_{\min}$ the smallest error obtained by all algorithms over all initializations, and define
\begin{equation} \label{measerr}
E(t) = \frac{e(t) - e_{\min}}{||A||_F - e_{\min}} ,   
\end{equation}
where $e(t)$ is the error $||A-HH^T||_F$ achieved by an algorithm for a given initialization within $t$ seconds (and hence $e(0) = ||A-H_0 H_0^T||_F$ where $H_0$ is the initialization). 
The quantity $E(t)$ is therefore a normalized measure of the evolution of the objective function of a given algorithm on a given data set. 

The advantage of this measure is that it separates better the different algorithms, when using a log scale, since it goes to zero for the best algorithm (except for algorithms that are initialized randomly as we will report the average value of $E(t)$ over several random initializations; see below). 
We would like to stress out that the measure $E(t)$ from~\eqref{measerr} has to be interpreted with care. 
In fact, an algorithm for which $E(t)$ converges to zero simply means that it is the algorithm able to find the best solution among all algorithms (in other words, to identify a region with a better local minima). In fact, the different algorithms are initialized with different initial points: in particular, tSVD uses an SVD-based initialization. 
It does not necessarily mean that it converges the fastest: to compare (initial) convergence, one has to look at the values $E(t)$ for $t$ small. 
However, the measure $E(t)$ allows to better visualize the different algorithms. For example, displaying the relative error $||A-HH^T||/||A||_F$ allows to compare the initial convergence, but then the errors for all algorithms tend to converge at similar values and it is difficult to identify visually which one converges to the best solution. 

For the algorithms using random initialization (namely, Newton, ANLS, CD-Cyclic-Rand and CD-Shuffle-Rand), we will run the algorithms 10 times and report the average value of $E(t)$. 
For all data sets, we will run each algorithm for 60 seconds. 

All tests are performed using Matlab on a PC Intel CORE i5-4570 CPU @3.2GHz $\times$ 4, with 7.7G RAM. The codes are available online from \url{https://sites.google.com/site/nicolasgillis/}. 


\begin{remark}[Computation of the error]
Note that to compute $||A-HH^T||_F$, one should not compute $HH^T$ explicitly (especially if $A$ is sparse) and use instead 
\begin{eqnarray*}
||A-HH^T||_F^2  & = & ||A||_F^2 - 2 \langle A,  H H^T \rangle + ||H H^T||_F^2 \\
 & = & ||A||_F^2 - 2 \langle A H,  H \rangle  + ||H^T H||_F^2 .
\end{eqnarray*} 
\end{remark}

\subsection{Comparison} \label{compstate}

We now compare the different symNMF algorithms listed in section~\ref{testedalgo} according to the measure given in~\eqref{measerr} on the data sets described in section~\ref{testedalgo}, and on synthetic data sets.

\subsubsection{Real data sets} 

We start with the real data sets. 

\paragraph{Dense image data sets}  

 Figure~\ref{realdense60} displays the results for the dense real data sets. 
Table~\ref{denseriter} gives the number of iterations performed by each algorithm within the 60 seconds, and Table~\ref{densererr}  the final relative error $||A-HH^T||/||A||_F$ in percent. 

\begin{table*}[ht] 
\begin{center}
\caption{Average number of iterations performed by each algorithm within 60 seconds for the dense real data sets.} 
\label{denseriter} 
\small 
\begin{tabular}{|c||c|c|c|c|c|c|c|c|}
\hline
$r = 60$ & ANLS & Newton & tSVD & BetaSNMF & CD-Cyc.-0 & CD-Shuf.-0 & CD-Cyc.-Rand & CD-Shuf.-Rand \\ 
\hline 
ORL &  6599   &  2861   & 32685 & 33561 & 2514 & 2501 &  2261   &  2306   \\
Umist &  5305   &  1431   & 25982 & 18218 & 1500 & 1496 &  1406   &  1357   \\
CBCL &   473   &     4   & 12109 & 1281 & 116 & 113 &    95   &    95   \\
Frey &   705   &     5   & 15373 & 1876 & 166 & 157 &   138   &   139   \\
\hline
\end{tabular}
\end{center}
\end{table*}

\begin{table*}[ht]  
\begin{center}
\caption{Average relative error in percent ($100 \, * \, ||A-HH^T||_F/||A||_F$) of the final solution obtained by each algorithm within 60 seconds for the dense real data sets. For algorithms based on random initializations, the standard deviation is given.}  
\footnotesize   \begin{tabular}{|c||c|c|c|c|c|c|c|c|}
\hline
$r = 60$ & ANLS & Newton & tSVD & BetaSNMF & CD-Cyc.-0 & CD-Shuf.-0 & CD-Cyc.-Rand & CD-Shuf.-Rand \\ 
\hline
ORL & 0.288  $\pm$ 4e-3   & 0.341     & \textbf{0.141} & 0.143 $\pm$ 3e-4 & 0.142    & 0.143    & 0.165  $\pm$ 6e-4   & \textbf{0.141} $\pm$ 8e-5 \\
Umist & 0.718  $\pm$ 0.023   & 0.365     & \textbf{0.041} & 0.073 $\pm$ 7e-4 & 0.043    & 0.044    & 0.108  $\pm$ 2e-3   & 0.042   $\pm$ 2e-4  \\
CBCL & 0.254  $\pm$ 2e-3   & 4.52     & \textbf{0.046} & 0.679 $\pm$ 3e-3 & 0.169    & 0.176    & 0.751  $\pm$ 7e-3   & 0.157     $\pm$ 1e-3 \\
Frey & 0.083   $\pm$ 6e-4  & 4.88      & \textbf{0.057} & 0.510 $\pm$ 2e-3 & 0.105    & 0.107    & 0.765  $\pm$ 4e-3   & 0.124   $\pm$ 2e-3  \\
\hline
\end{tabular}
\label{densererr}
\end{center}
\end{table*}

\normalsize

We observe the following: 
\begin{itemize}

\item In all cases, tSVD performs best and is able to generate the solution with the smallest objective function value among all algorithms. This might be a bit surprising since it works only with an approximation of the original data: it appears that for these real dense data sets, this approximation can be computed efficiently and allows tSVD to converge extremely fast to a very good solution. 

One of the reasons tSVD is so effective is because each iteration is $n$ times cheaper (once the truncated SVD is computed) hence it can perform many more iterations; see Table~\ref{denseriter}. 
Another crucial reason is that image data sets can be very well approximated by low-rank matrices (see section~\ref{synthd} for a confirmation of this behavior). Therefore, for images, tSVD is the best method to use as it provides a very good solution extremely fast.

\item When it comes to initial convergence, CD-Cyclic-0 and CD-Shuffle-0 perform best: they are able to generate very fast a good solution. In all cases, they are the fastest to generate a solution at a relative error of $1\%$ of the final solution of tSVD. Moreover, the fact that tSVD does not generate any solution  as long as  the truncated SVD is not computed could be critical for larger data sets. For example, for CBCL with $n = 2429$ and $r = 60$, the truncated SVD takes about 6 seconds to compute while, in the mean time, CD-Cyclic-0 and CD-Shuffle-0 generate a solution with relative error of 
$0.3\%$  from the final solution obtained by tSVD after 60 seconds. 

\item For these data sets, CD-Cyclic-0 and CD-Shuffle-0 perform exactly the same: for the zero initialization, it seems that shuffling the columns of $H$ does not play a crucial role. 

\item When initialized randomly, we observe that the CD method performs significantly better with random shuffling. 
Moreover, CD-Shuffle-Rand converges initially slower than CD-Shuffle-0 but is often able to converge to a better solution; in particular for the ORL and Umistim data sets.

\item Newton converges slowly, the main reason being that each iteration is very costly, namely $O(n^3 r)$ operations.

\item ANLS performs relatively well: it never converges initially faster than CD-based approaches but is able to generate a better final solution for the Frey data set.   

\item BetaSNMF does not perform well on these data sets compared to tSVD and CD methods, although performing better than ANLS and 2 out of 4 times better than ANLS.  

\item For algorithms based on random initializations, the standard deviation between several runs is rather small, illustrating the fact that these algorithms converge to solutions with similar final errors. 

\end{itemize} 

\textbf{Conclusion}: for image data sets, tSVD performs the best. However, CD-Cyclic-0 allows a very fast initial convergence and can be used to obtain very quickly a good solution. 


\begin{figure*}
	\begin{center}
	\begin{tabular}{cc}
 \includegraphics[width=8cm]{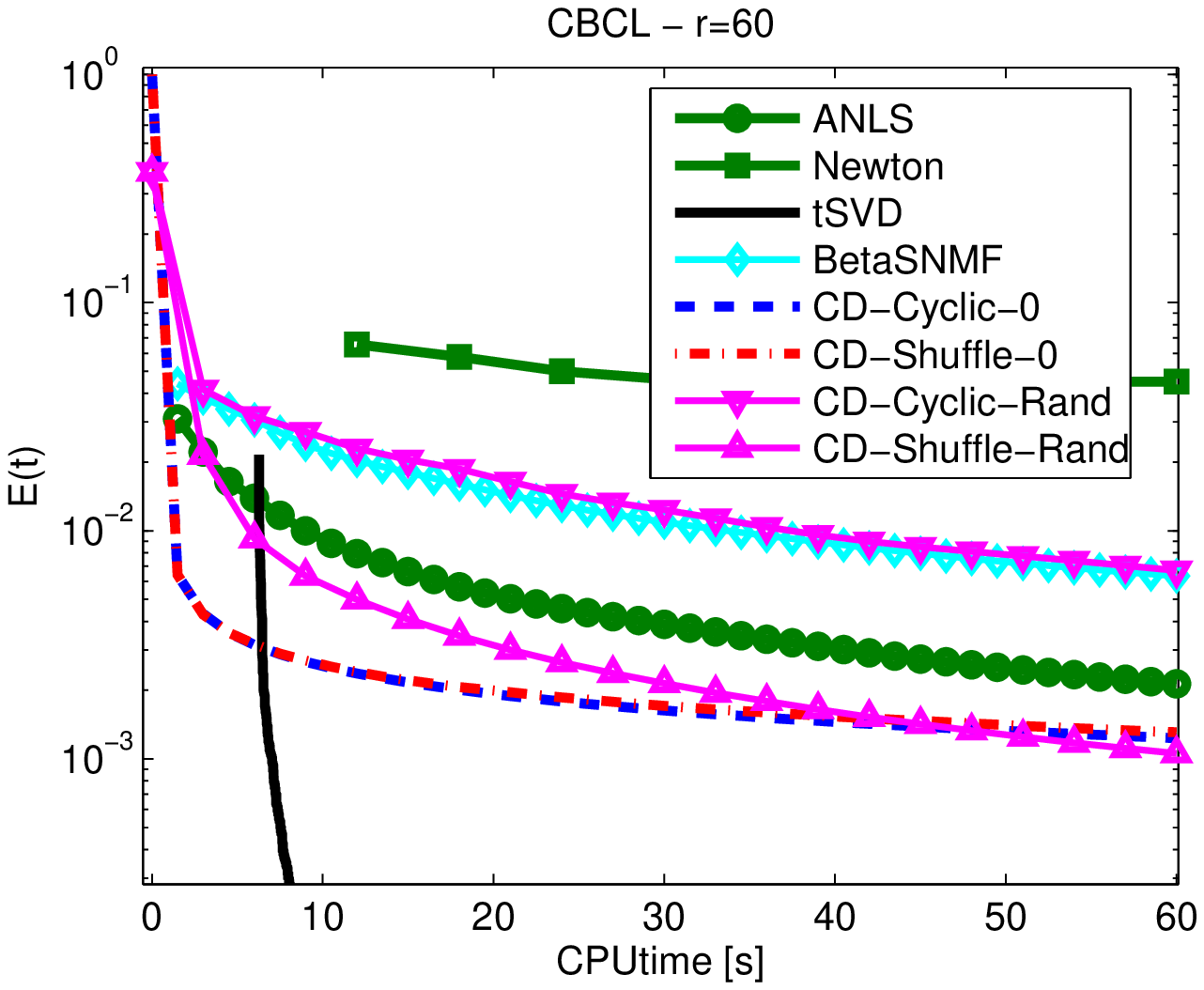} & \includegraphics[width=8cm]{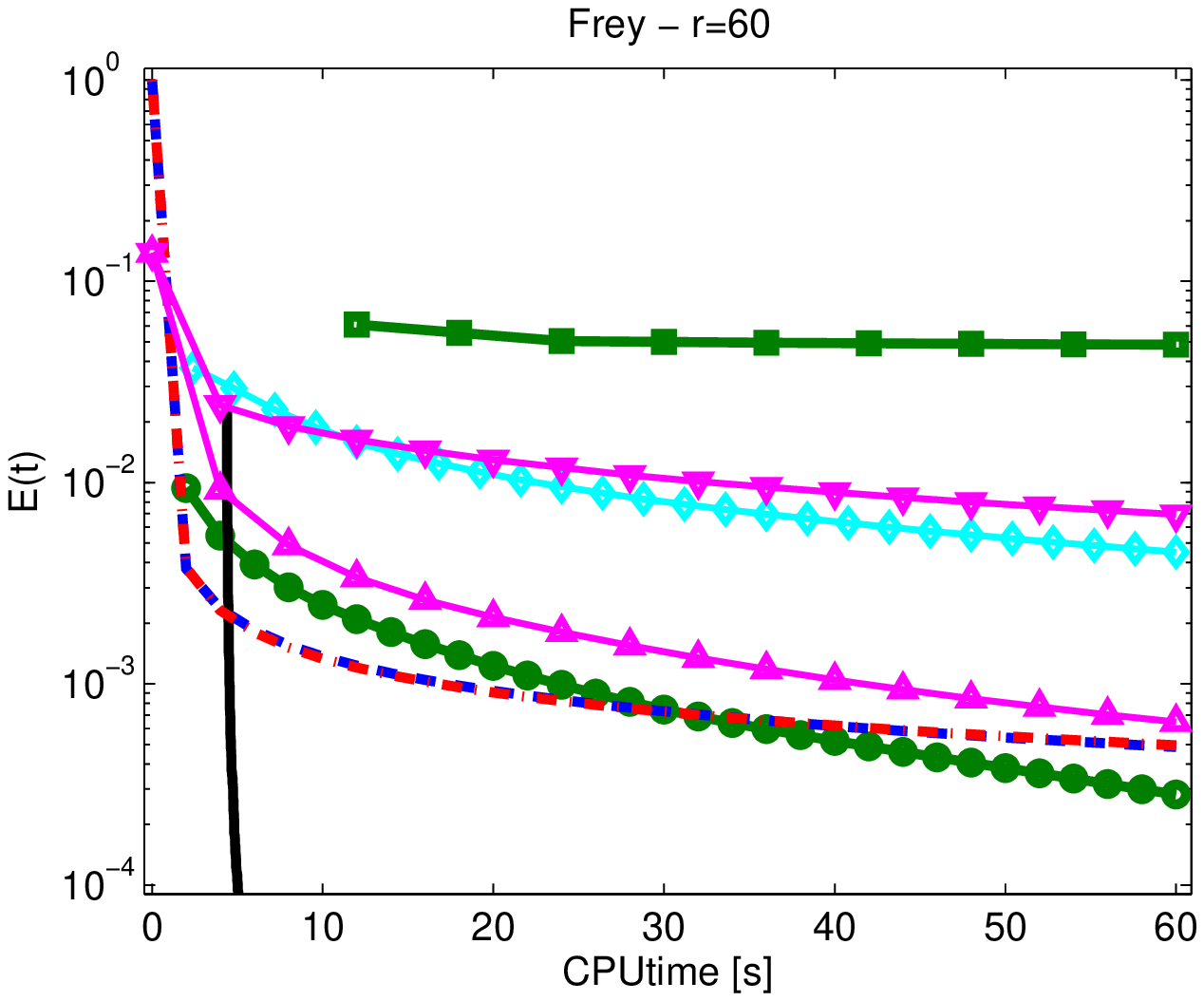} \\
	  \includegraphics[width=8cm]{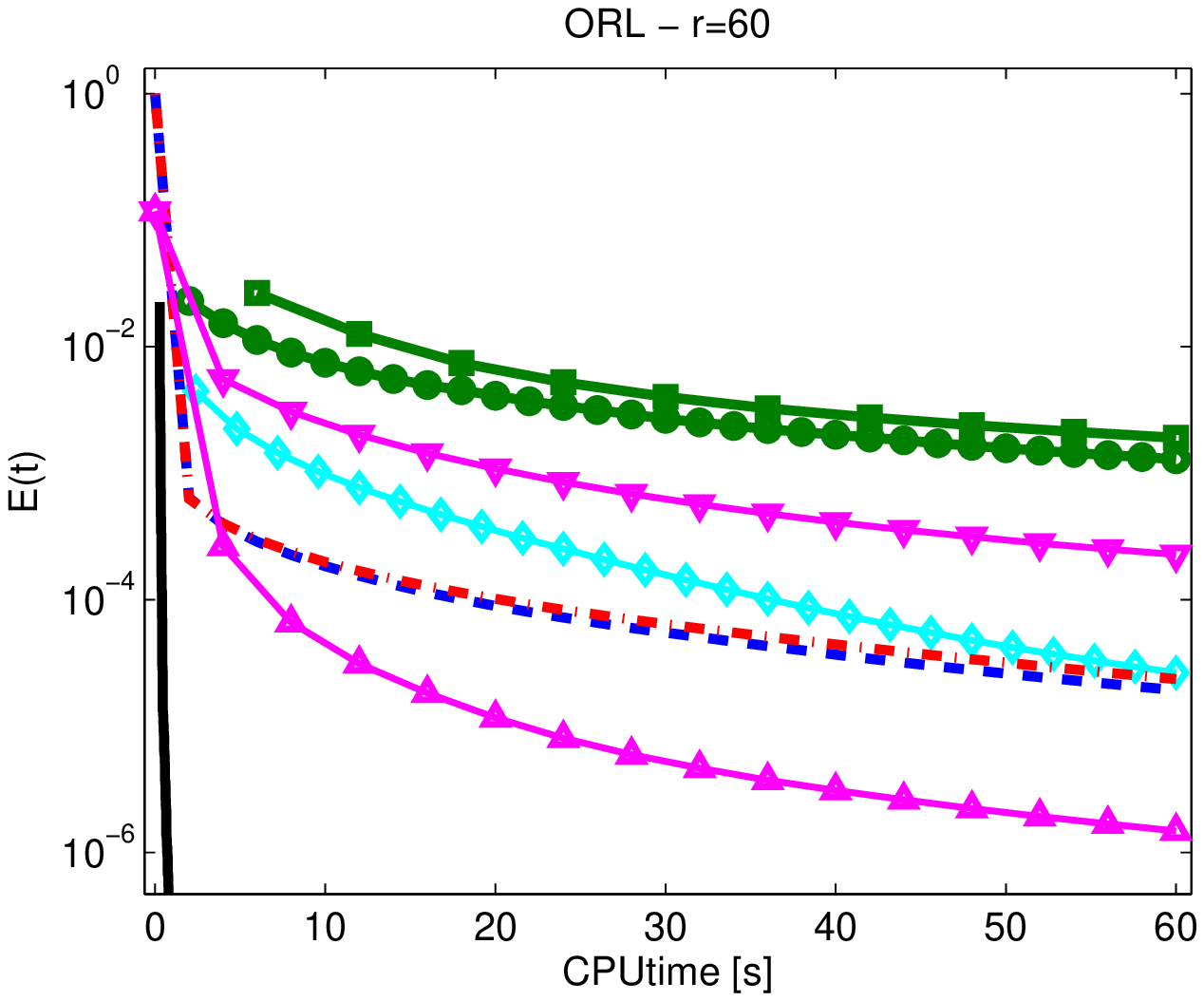} & \includegraphics[width=8cm]{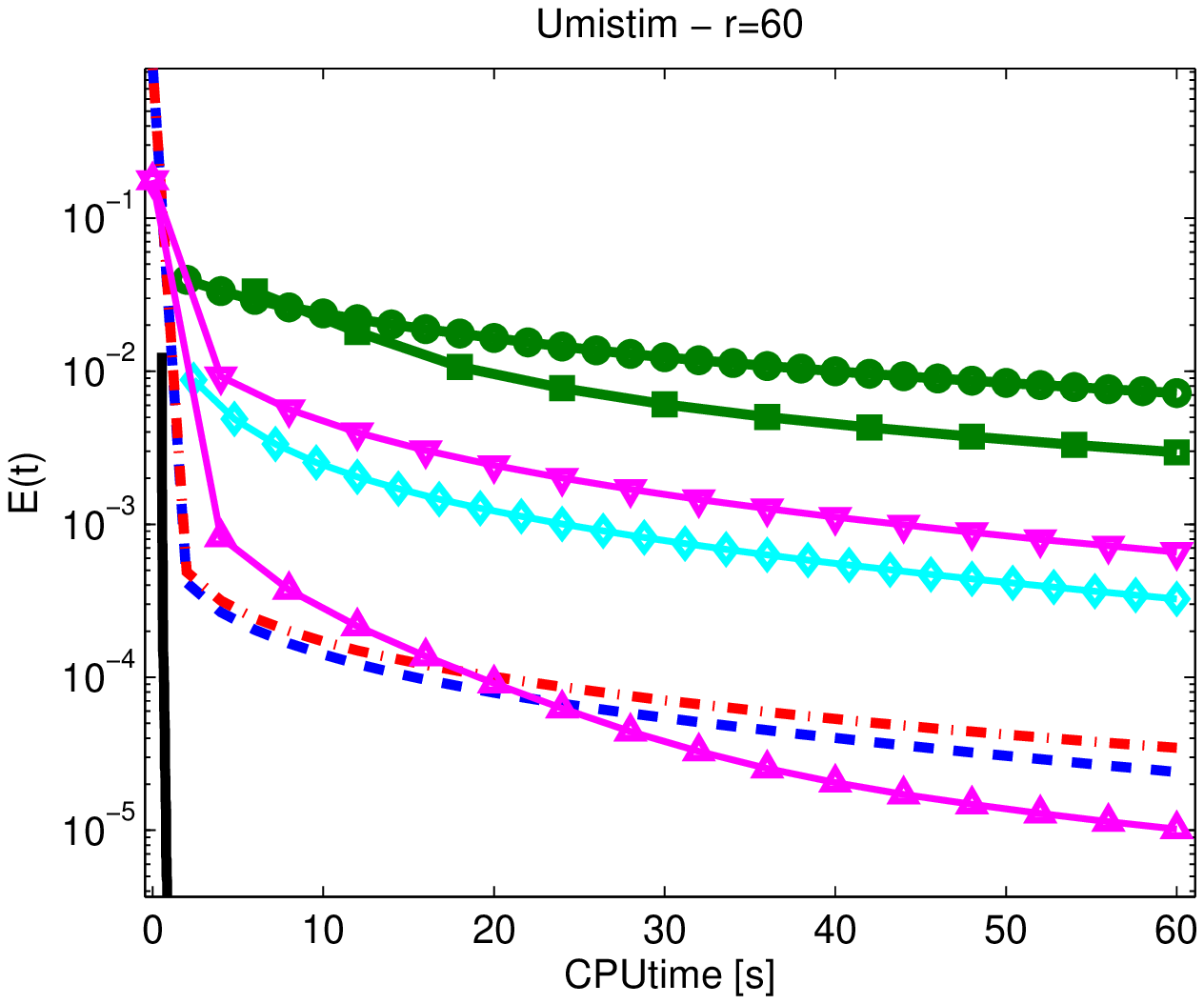} 
	\end{tabular} 
\end{center}
\caption{Evolution of the measure~\eqref{measerr} of the different symNMF algorithms on the dense real data sets for $r = 60$.} 
\label{realdense60} 
\end{figure*}

\paragraph{Sparse document data sets}   

 Figure~\ref{realsparse30} displays the results for the real sparse data sets. Table~\ref{sparseriter} gives the number of iterations performed by each algorithm within the 60 seconds, 
and Table~\ref{sparsererr}  the final relative error $||A-HH^T||/||A||_F$ in percent. 

For some data sets (namely, la1 and reviews), computing the truncated SVD of $A$ was not possible with Matlab within 60 seconds hence tSVD was not able to return any solution; see Remark~\ref{rem3} for a discussion. Moreover,  Newton is not displayed because it is not designed for sparse matrices and runs out of memory~\cite{kuang2013symnmf}. 

\begin{table*}[ht] 
\begin{center}
\caption{Average number of iterations performed by each algorithm within 60 seconds for the sparse real data sets.} 
\label{sparseriter} 
\begin{tabular}{|c||c|c|c|c|c|c|c|}
\hline
$r = 30$ & ANLS  & tSVD & BetaSNMF & CD-Cyc.-0 & CD-Shuf.-0 & CD-Cyc.-Rand & CD-Shuf.-Rand \\
\hline
classic &    41.3 & 1212 & 237 & 44 & 44 &    44  &    44  \\
sports &    34  & 4330 & 66 & 23 & 23 &    23  &    23  \\
reviews &    16.7 & 0 & 41 & 13 & 13 &    13  &    13  \\
hitech &    61  & 8334 & 115 & 37 & 37 &    37  &    37  \\
ohscal &    91.7 & 7855 & 199 & 61 & 61 &    61  &    61  \\
la1 &    16  & 0 & 43 & 15 & 15 &    15  &    15  \\
\hline
\end{tabular}
\end{center} 
\end{table*}

\begin{table*}[ht]  
\begin{center}
\caption{Average relative error in percent ($100 \, * \, ||A-HH^T||_F/||A||_F$) of the final solution obtained by each algorithm within 60 seconds for the sparse real data sets. For algorithms based on random initializations, the standard deviation is given.} 
\label{sparsererr}
\small 
\begin{tabular}{|c||c|c|c|c|c|c|c|c|}
\hline
$r = 30$ & ANLS  & tSVD & BetaSNMF & CD-Cyc.-0 & CD-Shuf.-0 & CD-Cyc.-Rand & CD-Shuf.-Rand \\ 
\hline 
classic & 99.99 $\pm$ 1e-4  & 39.8  & 38.1 $\pm$ 0.14 & \textbf{37.6}  & 37.8  & \textbf{37.6} $\pm$ 0.09  & 37.7 $\pm$ 0.09  \\
sports & 99.9\hspace{0.15cm} $\pm$ 1e-3 & 19.2 & 20.1 $\pm$ 0.28 & \textbf{17.5}  & 17.7  & \textbf{17.5} $\pm$ 0.11  & 17.7 $\pm$ 0.10  \\
reviews & 99.9\hspace{0.15cm} $\pm$ 7e-4 & / & 20.0 $\pm$ 0.56 &\textbf{16.3}  & 16.4  & \textbf{16.3} $\pm$ 0.10  & \textbf{16.3} $\pm$ 0.08  \\
hitech & 99.5\hspace{0.15cm} $\pm$ 4e-3 & 33.3 & 31.3 $\pm$ 0.22 & 30.5  & 30.5  & \textbf{30.4} $\pm$ 0.09  & \textbf{30.4} $\pm$ 0.08  \\
ohscal & 99.95 $\pm$ 1e-3  & 22.2 & 21.6 $\pm$ 0.11 & \textbf{20.9}  & 21.0  & \textbf{20.9} $\pm$ 0.04 & \textbf{20.9} $\pm$ 0.04  \\
la1 & 99.9\hspace{0.15cm} $\pm$ 8e-4 & / & 34.9 $\pm$ 0.32 & \textbf{31.9}  & 32.0  & 32.1 $\pm$ 0.10  & 32.0  $\pm$ 0.10 \\
\hline
\end{tabular}
\end{center}
\end{table*}

\begin{figure*}
	\begin{center}
	\begin{tabular}{cc}
	\includegraphics[width=8cm]{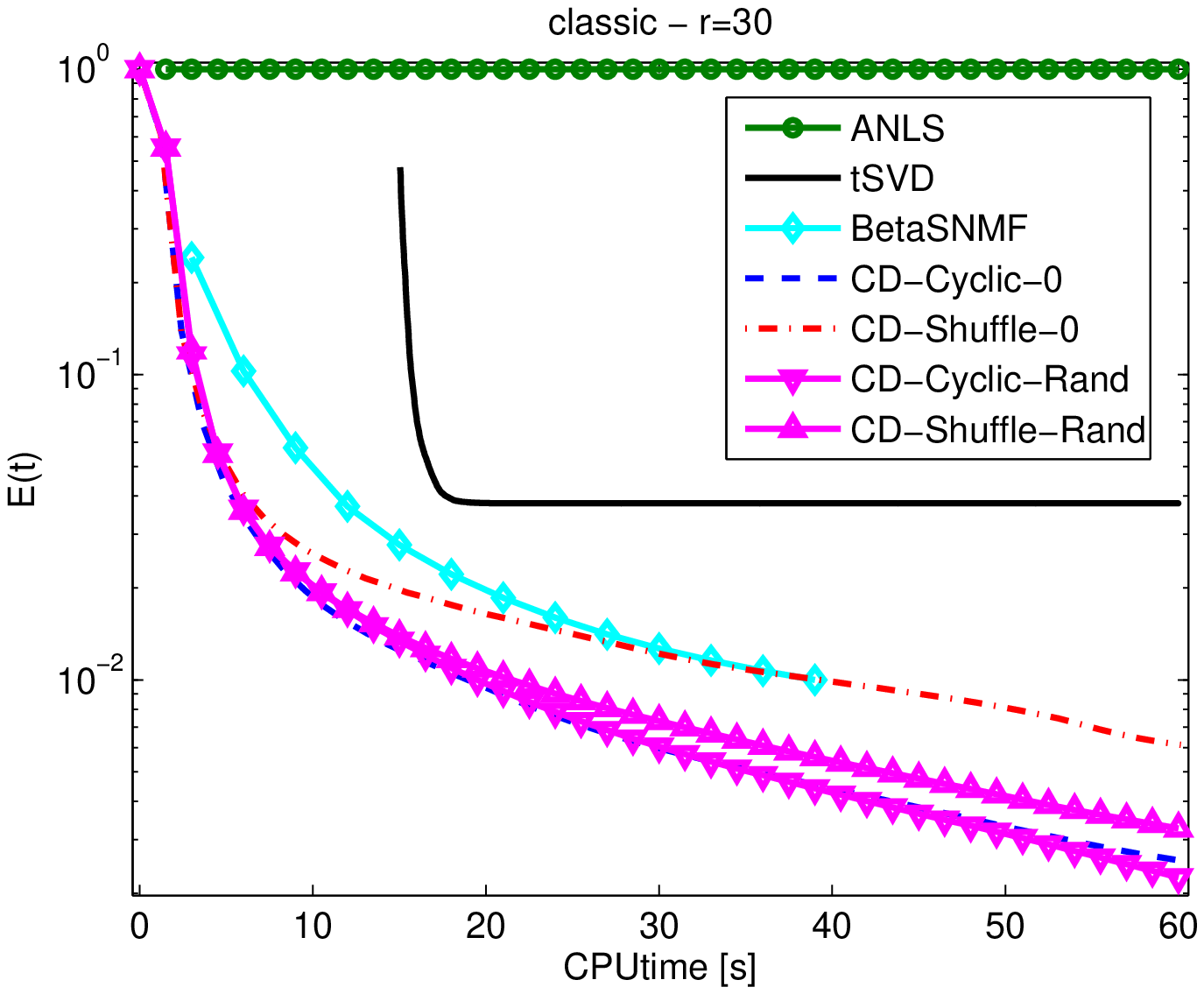} & \includegraphics[width=8cm]{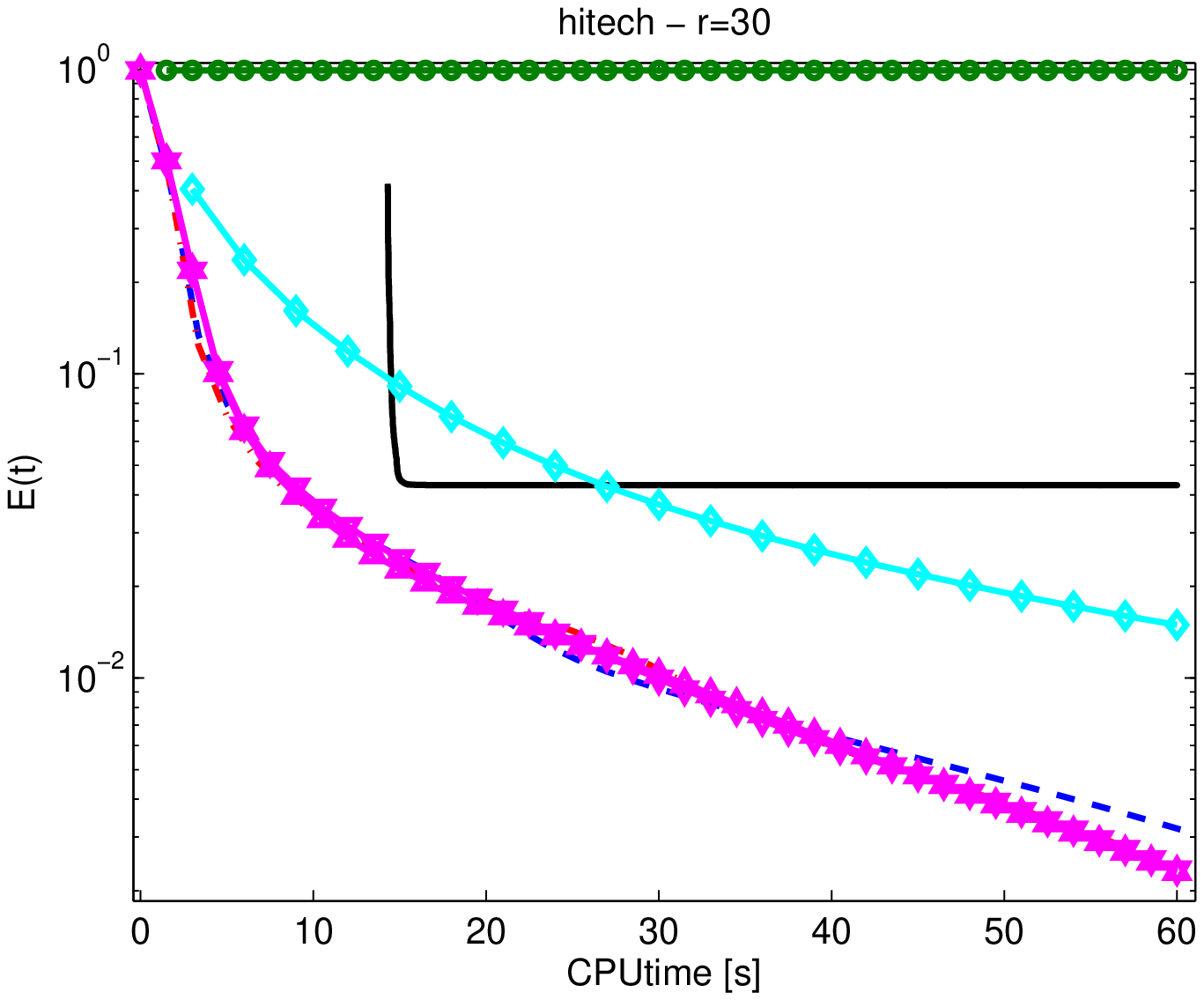} \\
	\includegraphics[width=8cm]{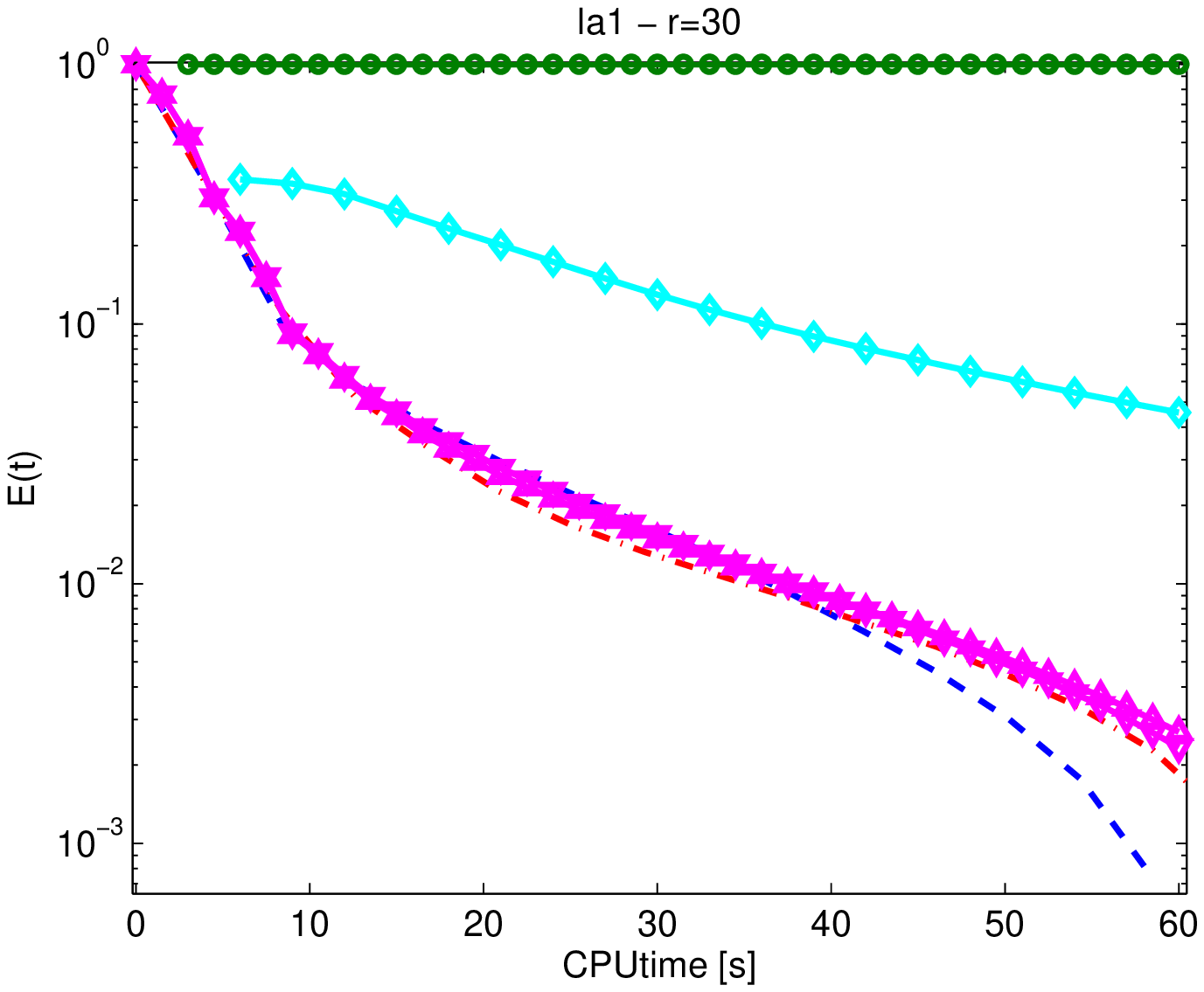} &	\includegraphics[width=8cm]{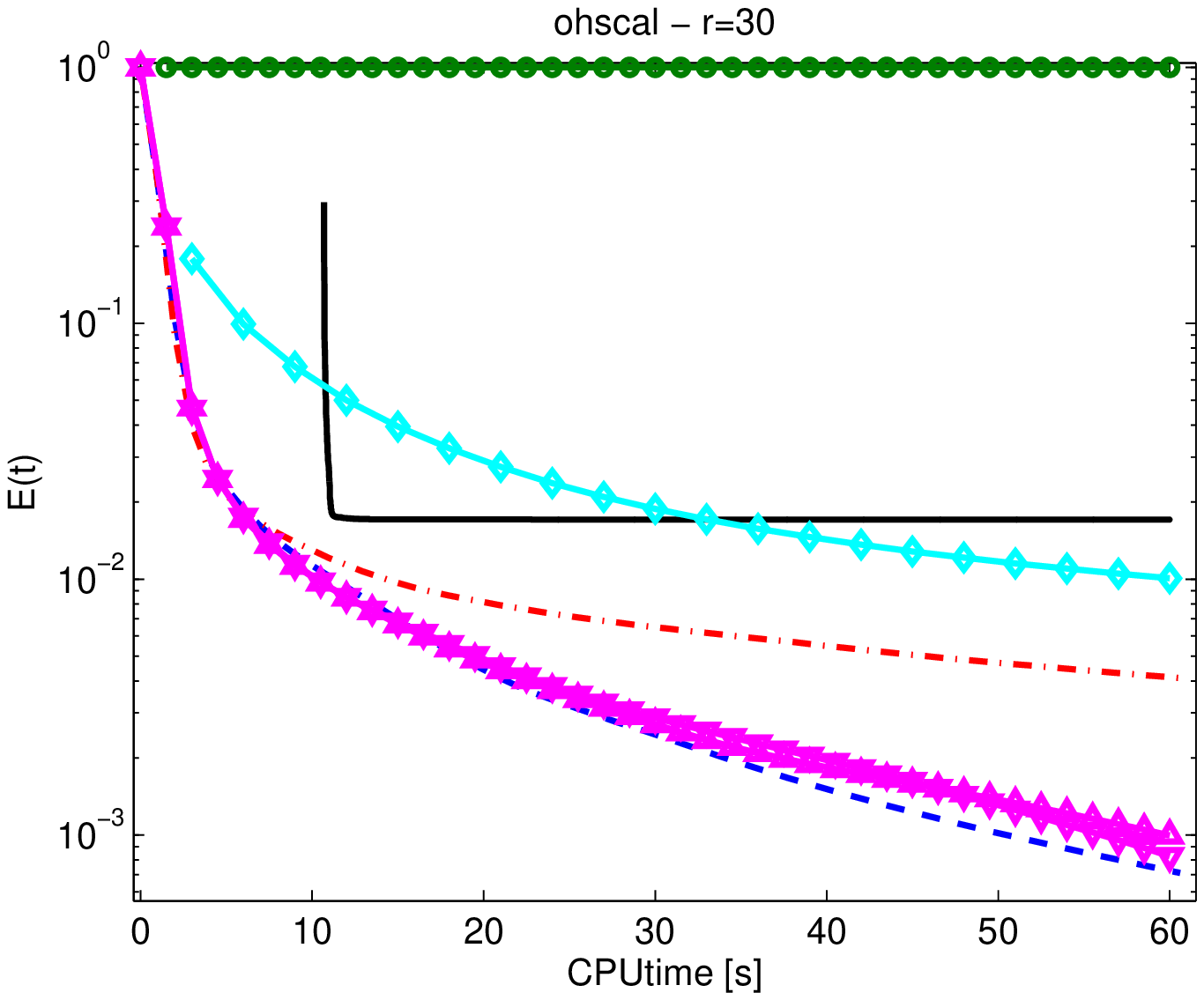}  \\
	  \includegraphics[width=8cm]{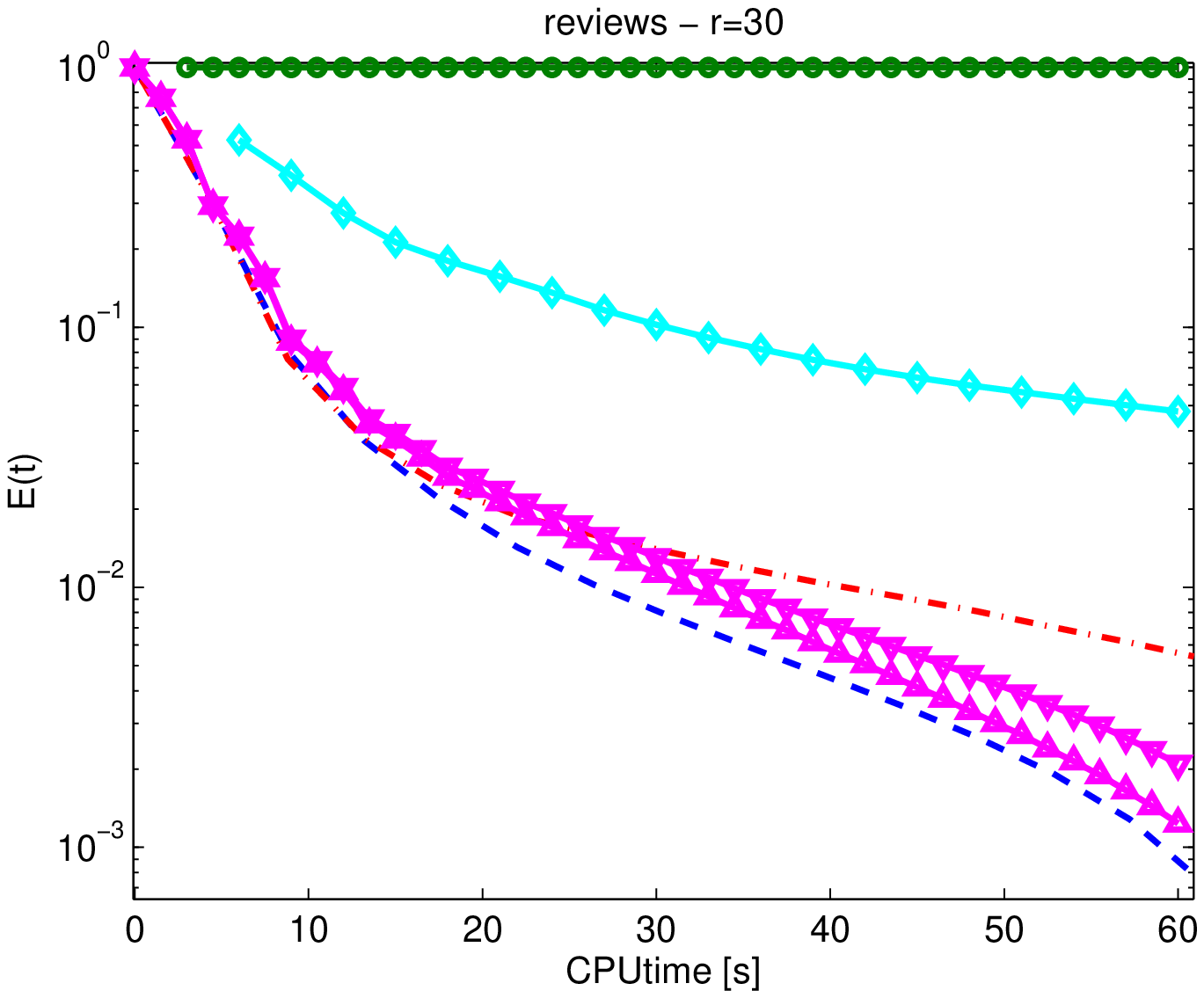} & \includegraphics[width=8cm]{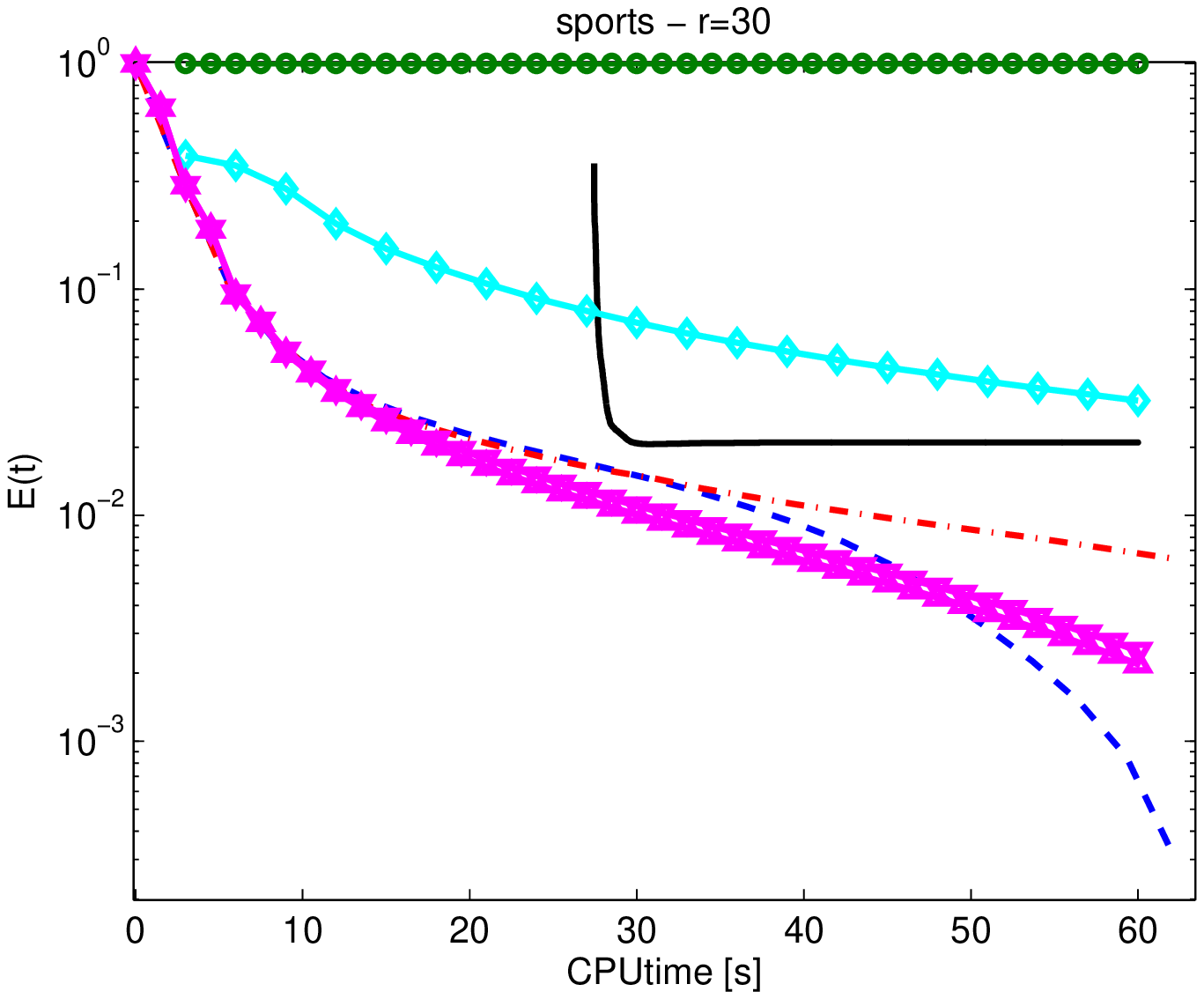} 
	\end{tabular} 
\end{center}
\caption{Evolution of the measure~\eqref{measerr} of the different symNMF algorithms on real sparse data sets for $r = 30$.} 
\label{realsparse30} 
\end{figure*}

We observe the following: 
\begin{itemize}

\item tSVD performs very poorly. The reason is twofold: (1)~the truncated SVD is very expensive to compute and (2)~sparse matrices are usually not close to being low-rank hence tSVD converges to a very poor solution (see section~\ref{synthd} for a confirmation of this behavior). 

\item ANLS performs very poorly and is not able to generate a good solution. In fact, it has difficulties to decrease the objective function (on the figures, it seems it does not decrease, but it actually decreases very slowly).  

\item BetaSNMF performs better than ANLS but does not compete with CD methods. (Note that, for the classic data sets, BetaSNMF was stopped prematurely because there was a division by zero which could have been avoided but we have strictly used the description of Algorithm~4 in~\cite{he2011symmetric}). 

\item All CD-based approaches are very effective and perform similarly. 
It seems that, in these cases, nor the initialization nor the order in which the columns of $H$ are updated plays a significant role. 

However, we observe that in all cases, $E(t)$ converges to a value between $10^{-2}$ and $10^{-4}$, never to a smaller value. This means that the best solution is always obtained with random initialization 
(in fact, for algorithms initialized randomly, Figure~\ref{realsparse30} reports the average over 10 runs) 
but, on average, random initialization performs similarly as the initialization with zero. 

\end{itemize}

\textbf{Conclusion}: for sparse document data sets, CD-based approaches outperform significantly the other tested methods.

\begin{remark}[SVD computation in tSVD] \label{rem3} 
It has to be noted that, in our numerical experiments, the matrix $A$ is constructed using the formula $A = X^TX$, where the columns of the matrix $X$ are the data points.  
In other words, we use the simple similarity measure $y^T z$ between two data points $y$ and $z$.  
In that case, the SVD of $A$ can be obtained from the SVD of $X$, hence can be made 
(i) more efficient (when $X$ has more columns than rows, that is, $m \ll n$), and 
(ii) numerically more accurate (because the condition number of $X^TX$ is equal to the square of that of $X$); see, e.g.,~\cite[Lecture~31]{trefethen1997numerical}. 
Moreover, in case of sparse data, this avoids the fill-in, as  observed in Table~\ref{dtm} where $X^TX$ is denser than $X$. 
Therefore, in this particular situation when $A=X^TX$ and $X$ is sparse and/or $m \ll n$, it is much better to compute the SVD of $A$ based on the SVD of $X$. Table~\ref{svdsMat} gives the computational time in both cases. 
\begin{table}[ht]  
\begin{center}
\caption{Computational time required to compute the rank-30 truncated SVD of $X$ and $X^TX$ using Matlab.} 
\label{svdsMat}
\begin{tabular}{|c||c|c|c|c|c|c|}
\hline
   \hspace{-0.2cm} \texttt{svds(.,30)} \hspace{-0.2cm} & classic & hitech  & la1  & ohscal & reviews   & sports \\ 
\hline 
\texttt{X'*X}  &  17.14   & 18.54   &   63.33  &    15    &     67.32   &   31.77    \\
\texttt{X}  & 5.55   &    0.82   &    3.08  &    2.87 &       1.39   &     2.98 \\
\hline
\end{tabular}
\end{center}
\end{table} 
In this particular scenario, it would make sense to use tSVD as an initialization procedure for CD methods to obtain rapidly a good initial iterate. However, looking at Figure~\ref{realsparse30} and Table~\ref{sparsererr} indicates that this would not necessarily be advantageous for the CD-based methods in all cases. For example, for the classic data set, tSVD would achieve a relative error of 39.8\% within about 6 seconds while CD methods obtain a similar relative error within that computing time. 
For the hitech data set however, this would be rather helpful since tSVD would only take about 1 second to obtain a relative error of 33.3\% while CD methods require about 9 seconds to do so.   

However, the goal of this paper is to provide an efficient algorithm for the general symNMF problem, without assuming any particular structure on the matrix $A$ (in practice the similarity measure between data points is usually not simply their inner product). . Therefore, we have not assumed that the matrix $A$ had this particular structure and only provide numerical comparison in that case. 
\end{remark}

\begin{remark}[Low-rank models for full-rank matrices] 
Although sparse data sets are usually not low rank, it still makes sense to try to find a low-rank structure that is close to a given data set, as this often allows to extract some pertinent information.  
In particular, in document classification and clustering, low-rank models have proven to be extremely useful; see the discussion in the Introduction and the references therein.  
Another important application where low-rank models have proven extremely useful although 
the data sets are usually not low-rank is \emph{recommender systems}~\cite{KBV09}. 
We also refer the reader to the recent survey~\cite{UHZB14}. 
\end{remark}

\subsubsection{Synthetic data sets: low-rank vs.\@ full rank matrices} \label{synthd}

In this section, we perform some numerical experiments on synthetic data sets. Our main motivation is to confirm the (expected) behavior observed on real data: tSVD performs extremely well for low-rank matrices and poorly on full-rank matrices. 


\paragraph{Low-rank input matrices} 

The most natural way to generate nonnegative symmetric matrices of given cp-rank is to generate $H_*$ randomly and then compute $A = H_*H_*^T$. In this section, we use the Matlab function $H_* = \texttt{rand}(n,r)$ with $n = 500$ and $r = 30, 60$, that is, 
each entry of $H_*$ is generated uniformly at random in the interval [0,1]. 
We have generated 10 such matrices for each rank, and Figure~\ref{denselrsynth} displays the average value for the measure~\eqref{measerr} but we use here $e_{\text{min}} = 0$ since it is the known optimal value. 
\begin{figure*}
	\begin{center}
	\begin{tabular}{cc}
	\includegraphics[width=8cm]{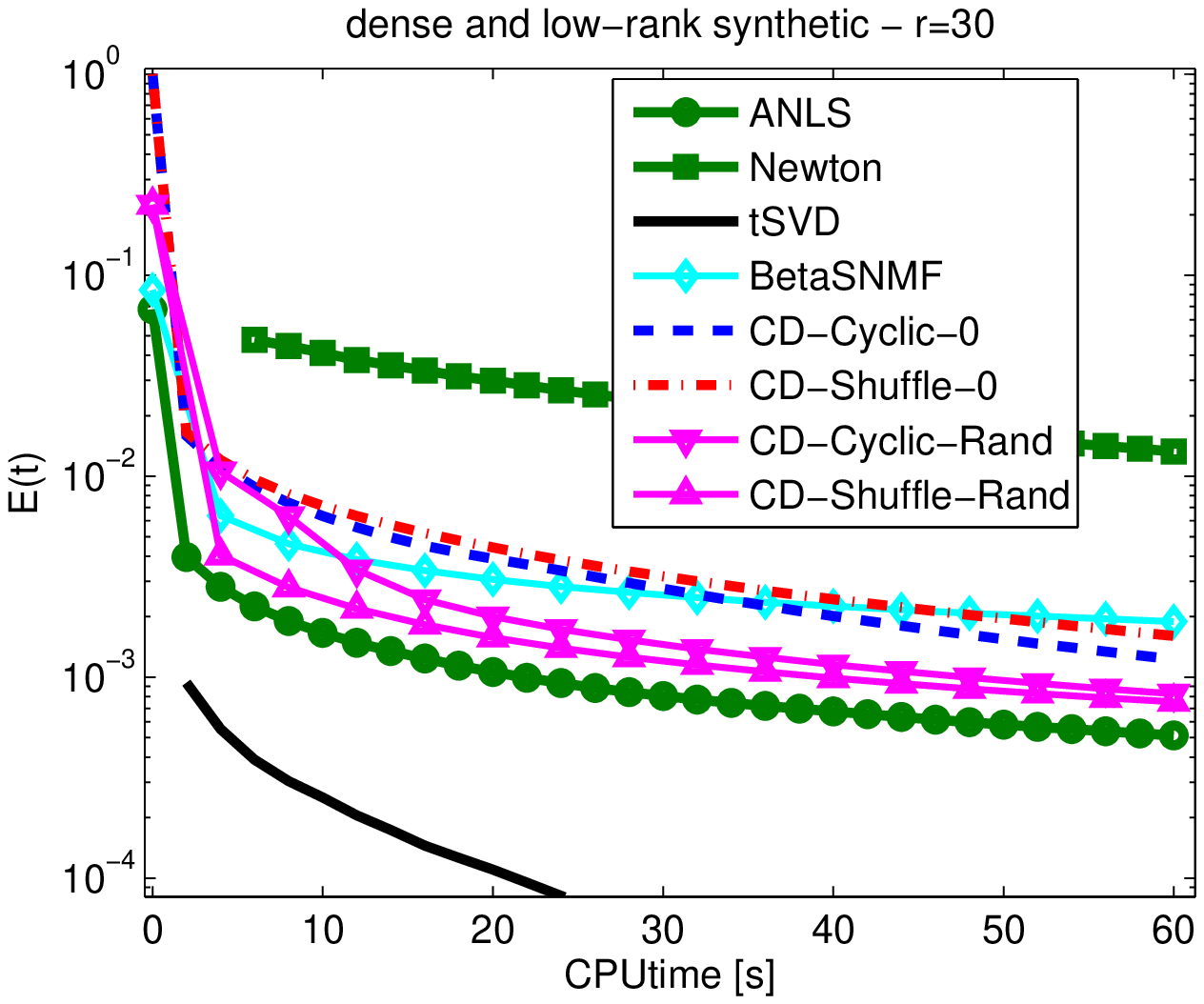} & \includegraphics[width=8cm]{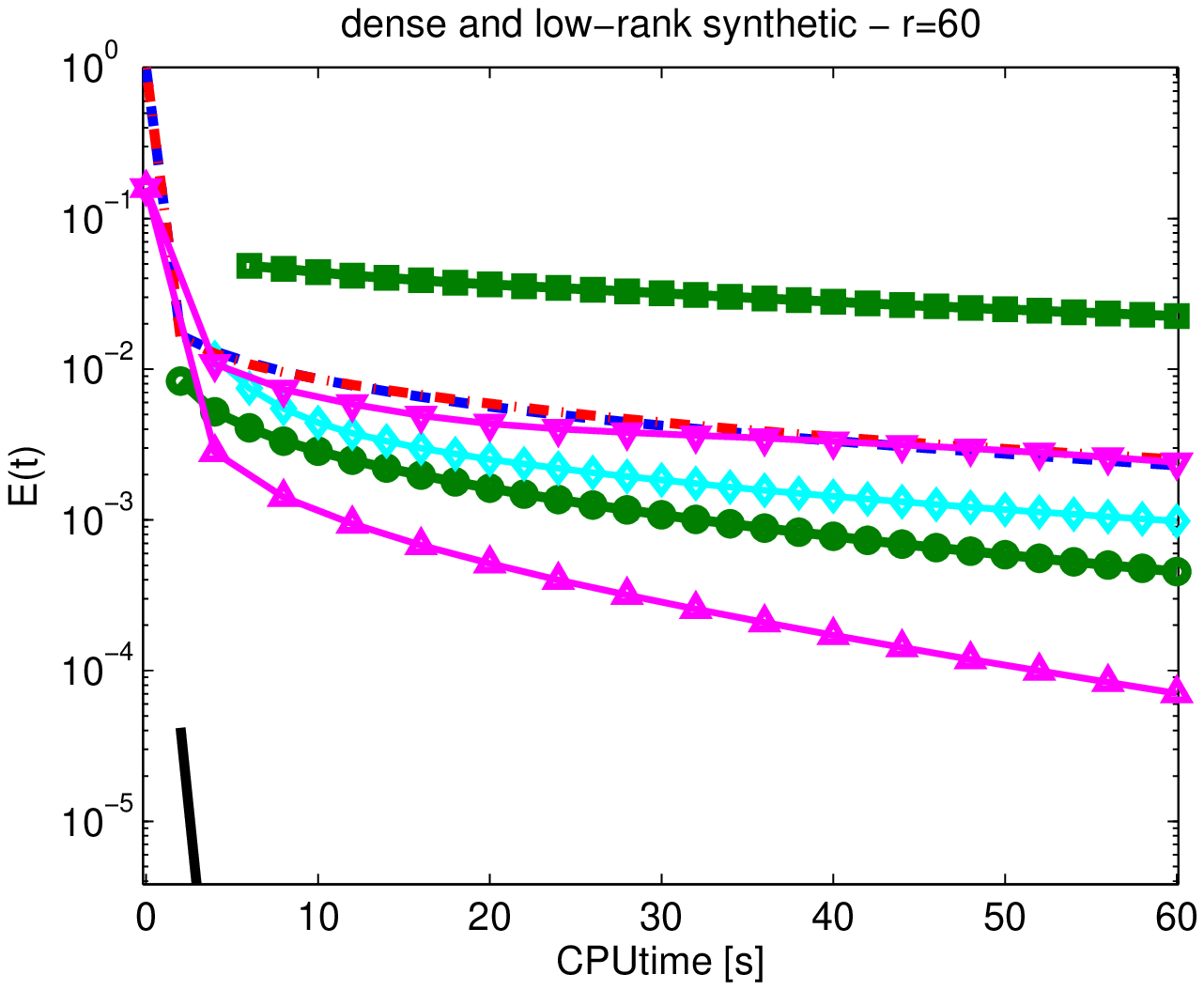} 
	\end{tabular} 
\end{center}
\caption{Evolution of the measure~\eqref{measerr} of the different symNMF algorithms on dense and low-rank synthetic data sets for $r = 30$ (left) and $r = 60$ (right).} 
\label{denselrsynth} 
\end{figure*}


We observe that, in all cases, tSVD outperforms all methods. Moreover, it seems that the SVD-based initialization is very effective. 
The reason is that $A$ has exactly rank $r$ and hence its best rank-$r$ approximation is exact. 
Moreover, tSVD only works in the correct subspace in which $H_*$ belongs  hence converges much faster than the other methods. 

Except for Newton, the other algorithms perform similarly. 
It is worth noting that the same behavior we observed for real dense data sets is present here: 
CD-Shuffle-Rand performs better than CD-Cyclic-Rand, while shuffling the columns of $H$ before each iteration does not play a crucial role with the zero initialization. 


\paragraph{Full-rank input matrices} 

A simple way to generate nonnegative symmetric matrices of full rank is to generate a matrix $B$ randomly and then compute $A = B + B^T$. In this section, we use the Matlab function $B = \texttt{rand}(n)$ with $n = 500$.  
We have generated 10 such matrices for each rank, and Figure~\ref{densefrsynth} displays the average value for the measure $E(t)$ from~\eqref{measerr}. 
 Figure~\ref{densefrsynth} displays the results. 
\begin{figure*}
	\begin{center}
	\begin{tabular}{cc}
	\includegraphics[width=8cm]{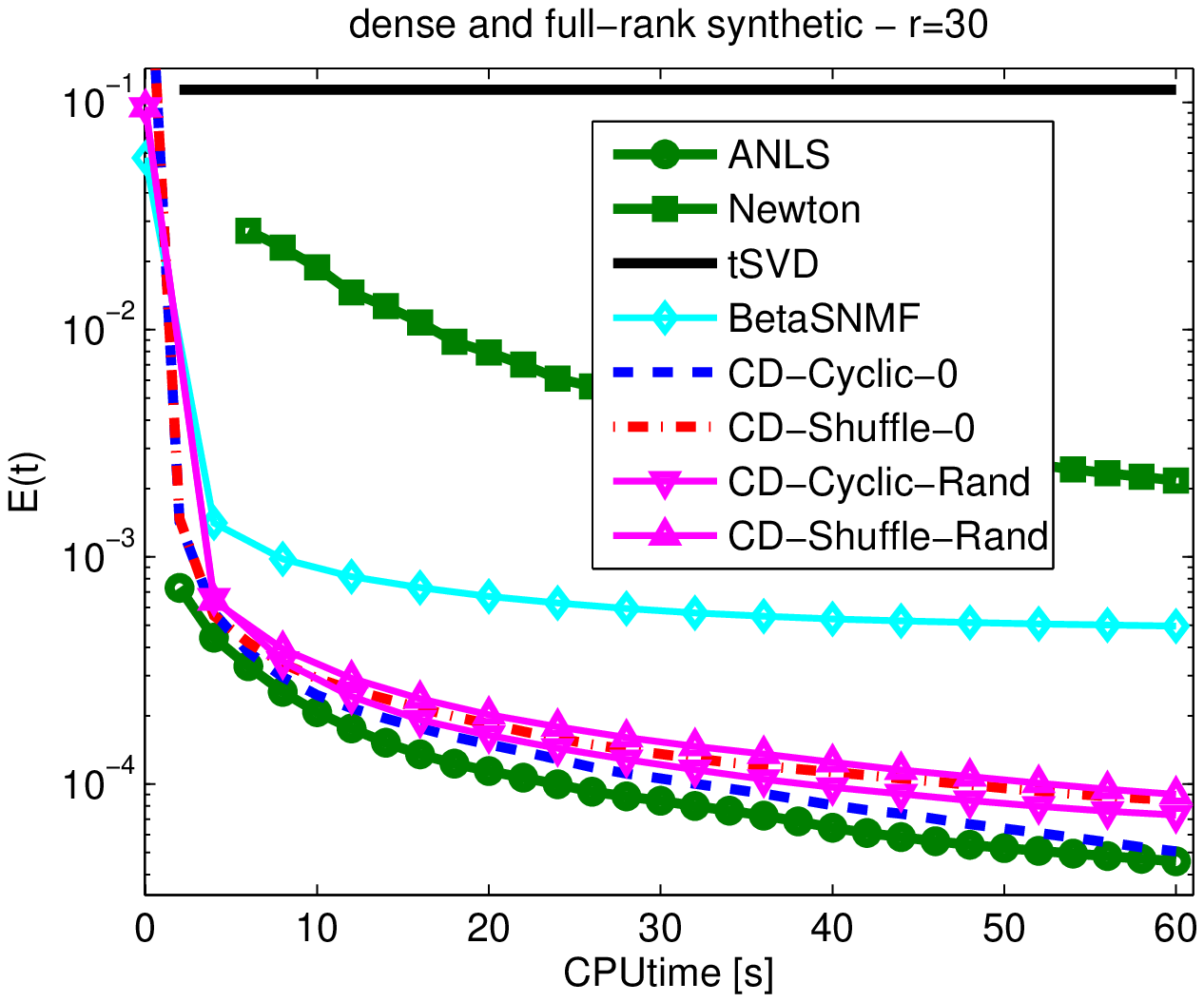} & \includegraphics[width=8cm]{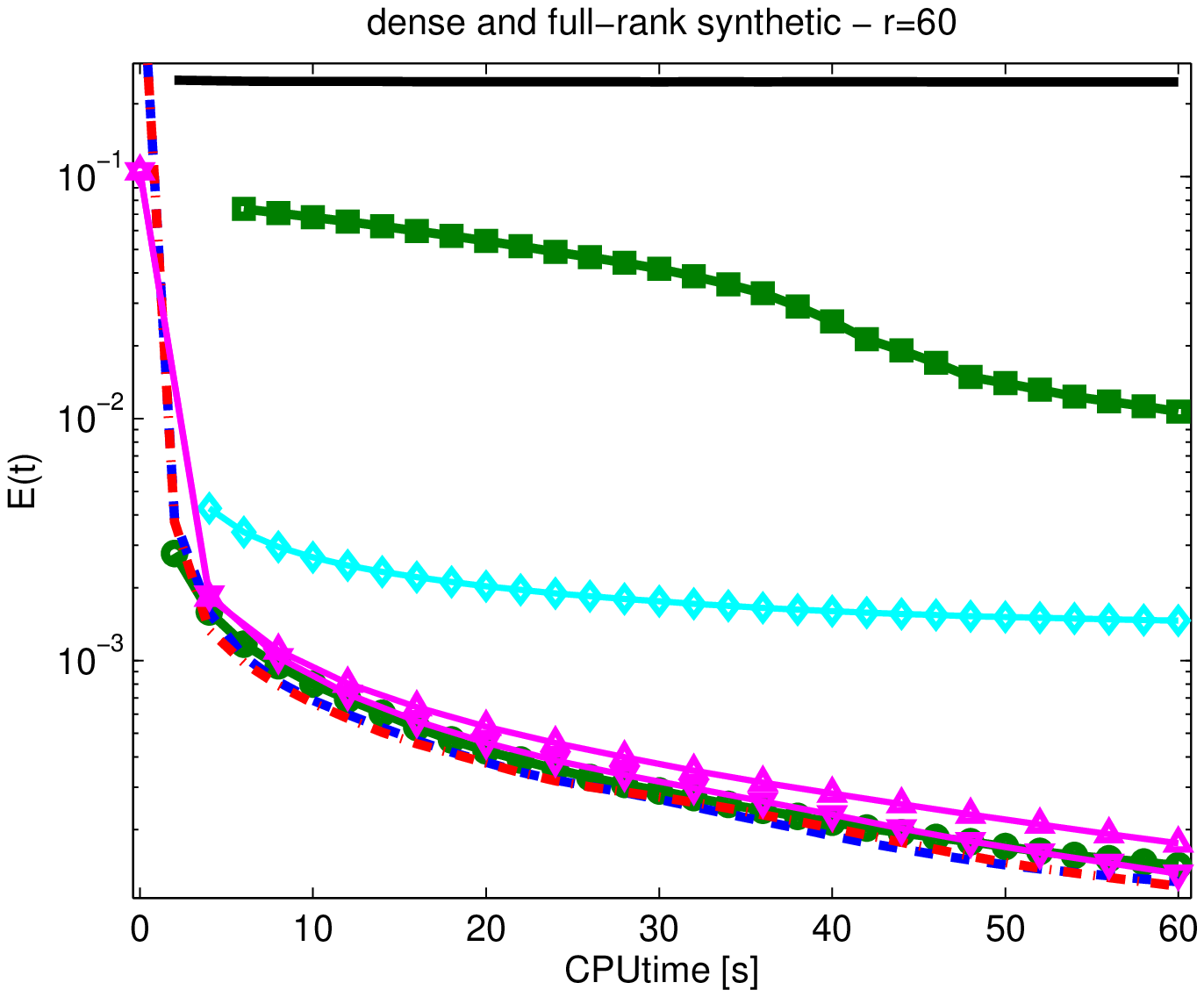} 
	\end{tabular} 
\end{center}
\caption{Evolution of the measure~\eqref{measerr} of the different symNMF algorithms on dense full-rank synthetic data sets for $r = 30$ (left) and $r = 60$ (right).} 
\label{densefrsynth} 
\end{figure*}


We observe that, in all cases, tSVD performs extremely poorly while all other methods (except for Newton and BetaSNMF) perform similarly. 
The reason is that tSVD works only with the best rank-$r$ approximation of $A$, which is poor when $A$ has full rank.

\subsubsection{Summary of results}

Clearly, tSVD and CD-based approaches are the most effective, although ANLS sometimes performs competitively for the dense data sets.  
However, tSVD performs extremely well only when the input matrix is low rank (cf.~low-rank synthetic data sets) or close to being low rank (cf.~image data sets). There are three cases when it performs very poorly: 
\begin{itemize}

\item It cannot perform a symNMF when the factorization rank $r$ is larger than the rank of $A$, that is, when $r > \text{rank}(A)$, which may be necessary for matrices with high cp-rank (in fact, the cp-rank can be much higher than the rank~\cite{BS03}). 

\item If the truncated SVD is a poor approximation of $A$, the algorithm will perform poorly since it does not use any other information; see the results for the full rank synthetic data sets and the sparse real data sets.  

\item The algorithm returns no solution as long as the SVD is not computed. In some cases, the cost of computing the truncated SVD is high and tSVD terminates before any solution to symNMF is produced; see the sparse real data sets. 

\end{itemize}

To conclude, CD-based approaches are overall the most reliable and most effective methods to solve symNMF~\eqref{symNMF}. 
For dense data sets, initialization at zero allows a faster initial convergence, while 
CD-Shuffle-Rand generates in average the best solution and 
CD-Cyclic-Rand does not perform well and is not recommended. 
For sparse data sets, all CD variants perform similarly and outperform the other tested algorithms.

\section{Conclusion and further research}
\label{sec6}

In this paper, we have proposed very efficient exact coordinate descent methods for symNMF~\eqref{symNMF} that performs competitively with state-of-the-art methods.  

Some interesting directions for further research are the following:
\begin{itemize}

\item The study of sparse symNMF, where one is looking for a sparser matrix $H$. A natural model would for example use the sparsity-inducing  $\ell_1$ norm and try to solve 
\begin{equation} \label{ssymNMF}
\min_{H \geq 0} \; \frac{1}{4} ||A - HH^T||_F^2 + \sum_{j=1}^r \Lambda_j ||H_{:,j}||_1 \; , 
\end{equation} 
for some penalty parameter $\Lambda \in \mathbb{R}_+^r$. 
Algorithm~\ref{alg:cyclicCD} can be easily adapted to handle~\eqref{ssymNMF}, by replacing the $b_{ij}$'s with $b_{ij} + \Lambda_j$. In fact, the derivative of the penalty term only influences the constant part in the gradient; see~\eqref{bij}. However, it seems the solutions of \eqref{ssymNMF} are very sensitive to the parameter $\Lambda$ and hence are difficult to tune. 
Note that another way to identify sparser factors is simply to increase the factorization rank $r$, or to sparsify the input matrix $A$ (only keeping the important edges in the graph induced by $A$; see~\cite{BSS13} and the references therein) --in fact, a sparser matrix $A$ induces sparser factors since 
\[
A_{ij} = 0 
 \Rightarrow 
H_{i,:} H_{j,:}^T \approx 0 
 \Rightarrow  
H_{ik} \approx 0 \text{ or } H_{jk} \approx 0 \, \forall k. 
\] 
This is an interesting observation: $A_{ij} = 0$ implies a (soft) orthogonality constraints on the rows of $H$. This is rather natural: if item $i$ does not share any similarity with item $j$ ($A_{ij} = 0$), then they should be assigned to different clusters ($H_{ik} \approx 0 \text{ or } H_{jk} \approx 0 \text{ for all } k$). 

\item The design of more efficient algorithms for symNMF. For example, a promising direction would be to combine the idea from~\cite{HSS14} that use a compressed version of $A$ with very cheap per-iteration cost with our more reliable CD method, 
to combine the best of both worlds. 


\end{itemize}

\bibliographystyle{plain} 
\bibliography{Biography}

\end{document}